\theoremstyle{plain}
\newtheorem{Thm}{Theorem}[section]
\newtheorem{Lem}[Thm]{Lemma}
\newtheorem{Prop}[Thm]{Proposition}
\newtheorem{Cor}[Thm]{Corollary}
\theoremstyle{definition}
\newtheorem{Def}[Thm]{Definition}
\tikzstyle{vertex}=[circle, draw, inner sep=0pt, minimum size=6pt] 
\newcommand{\vertex}{\node[vertex]}
\tikzset{node distance=2cm, auto}
\newcommand{\RRR}{\mathcal{R}} 
\newcommand{\AAA}{\mathcal{A}} 
\newcommand{\BBB}{\mathcal{B}} 
\title{The niche graphs of bipartite tournaments}
\author{\textsc{Soogang Eoh}%
\footnote{Department of Mathematics Education,
Seoul National University, Seoul 08826, Korea.
\textit{E-mail}: \texttt{mathfish@snu.ac.kr}}
\quad
\textsc{Jihoon Choi}%
\footnote{Department of Mathematics Education,
Cheongju University, Cheongju 28503, Korea.
\textit{E-mail}: \texttt{jihoon@cju.ac.kr}}
\quad
\textsc{Suh-Ryung Kim}%
\footnote{Department of Mathematics Education,
Seoul National University, Seoul 08826, Korea.
\textit{E-mail}: \texttt{srkim@snu.ac.kr}}
\quad
\textsc{Miok Oh}%
\footnote{Department of Mathematics Education,
Seoul National University, Seoul 08826, Korea.
\textit{E-mail}: \texttt{amieoki0@snu.ac.kr}}
}
\begin{document}
\maketitle
\begin{abstract}
In this paper, we completely characterize the niche graphs of bipartite tournaments and find their interesting properties.
\end{abstract}
\noindent
{\it Keywords}: niche graph; bipartite tournament; competition graph; niche-realizable; chordal graph; matching; hamiltonian graph

\section{Introduction}
In this paper, a graph means a simple graph.

The  {\em niche graph} of a digraph $D$ is a graph with vertex set $V(D)$ and  edge set  $\{uv \mid (u,w) \in A(D) \text{ and } (v,w) \in A(D), \text{ or } (w,u) \in A(D) \text{ and } (w,v) \in A(D) \text{ for some } w \in V(D)\}$. The notion of niche graph is a variant of competition graph. The \emph{competition graph} of $D$
is the graph having vertex set $V(D)$ and edge set $\{ uv \mid (u,w) \in A(D) \text{ and } (v,w) \in A(D) \text{ for some } w \in V(D)\}$.
Cohen~\cite{cohen} introduced the notion of competition graph while studying predator-prey concepts in ecological food webs. Cohen's empirical observation that real-world competition graphs are usually interval graphs had led to a great deal of research on the structure of competition graphs and on the relation between the structure of digraphs and their corresponding competition graphs. In the same vein, various variants of competition graph have been introduced and studied, one of which is the notion of niche graph  introduced by Cable~{\it et al.}~\cite{Cable} (see \cite{m-step, tournament, p-competition, park2014niche, RobertsSheng, Scott} for other variants of competition graph). For work on this topic, see~\cite{bowser1991some, bowser1999niche, fishburn1992niche, seager1998cyclic}.

An orientation of a complete bipartite graph is sometimes called a \emph{bipartite tournament} and we use whichever of the two terms is more suitable for a given situation throughout this paper.

Kim~{\it et al.}~\cite{Kim} and Choi~{\it et al.}~\cite{12stepbipartite} studied the competition graphs of bipartite tournaments and the $(1,2)$-step competition graphs of bipartite tournaments, respectively.
In this paper, we study the niche graphs of bipartite tournaments to extend the work done  by Bowser~{\it et al.}~\cite{bowser1999niche} who studied niche graphs of tournaments.

If a graph is the niche graph of a bipartite tournament, then we say that it is \emph{niche-realizable through a bipartite tournament}
(in this paper, we only consider bipartite tournaments
and so we omit ``through a bipartite tournament").

In Section~2, we introduce two relations on the vertex set of a bipartite tournament to utilize in characterizing the niche graph of a bipartite tournament.
Then we present fundamental properties of niche graphs of bipartite tournaments which are immediately obtained by properties of those relations.
In Section~3, we give a complete characterization of niche-realizable graphs. First of all, we show that a graph having three or four components is niche-realizable if and only if each of its components is complete.
Then we give our main result which completely characterizes niche-realizable graphs with exactly two components. In Section~4, we find meaningful properties of niche-realizable graphs based on the characterization of niche-realizable graphs obtained in Section~3.

\section{Relations on the vertex set of a bipartite tournament arising from its niche graph}

In this section, we introduce two relations on the vertex set of a bipartite tournament to utilize in characterizing the niche graph of a bipartite tournament.
Then we present fundamental properties of niche graphs of bipartite tournaments which are immediately obtained by properties of those relations.

\begin{Prop} \label{prop:noedge}
Let $D$ be a bipartite tournament with bipartition $(U, V)$.
Then the niche graph of $D$ has no edges between the vertices in $U$ and the vertices in $V$.
\end{Prop}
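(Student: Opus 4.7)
The plan is to argue by contradiction using only the defining property of a bipartite tournament, namely that every arc goes between $U$ and $V$. So I would suppose, toward a contradiction, that some edge of the niche graph joins a vertex $u \in U$ to a vertex $v \in V$, and then derive an impossible arc from the two cases in the definition of the niche graph.

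By definition of the niche graph, an edge $uv$ arises in one of two ways: either there is a common out-neighbor $w$ (so $(u,w), (v,w) \in A(D)$) or there is a common in-neighbor $w$ (so $(w,u), (w,v) \in A(D)$). In the first case, $(u,w) \in A(D)$ with $u \in U$ forces $w \in V$ (since $D$ is an orientation of a complete bipartite graph with parts $U$ and $V$, every arc has its endpoints in different parts), while $(v,w) \in A(D)$ with $v \in V$ forces $w \in U$. This is a contradiction since $U \cap V = \emptyset$. The second case is handled symmetrically: $(w,u) \in A(D)$ with $u \in U$ gives $w \in V$, and $(w,v) \in A(D)$ with $v \in V$ gives $w \in U$, another contradiction.

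Since both cases lead to a contradiction, no such $w$ exists, and so $uv$ is not an edge of the niche graph. There is really no obstacle here; the proof is a routine parity/bipartition check. I expect the author's proof to be a few lines of exactly this form, and the proposition is stated largely so that it can be invoked later when the bipartition of the niche graph is needed in the characterization results.
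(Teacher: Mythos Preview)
Your proof is correct and essentially matches the paper's argument: the paper observes directly that $N^+_D(u)\cap N^+_D(v)\subseteq V\cap U=\emptyset$ and $N^-_D(u)\cap N^-_D(v)\subseteq V\cap U=\emptyset$, which is your contradiction argument rephrased as a set containment. The only cosmetic difference is that you frame it by contradiction while the paper argues directly.
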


\begin{proof}
Take a vertex $u$ in $U$
and a vertex $v$ in $V$.
Then $N^+_D(u) \cup N^-_D(u) = V$ and $N^+_D(v) \cup N^-_D(v) = U$ and therefore $N^+_D(u) \cap N^+_D(v) \subseteq V \cap U$ and $N^-_D(u) \cap N^-_D(v) \subseteq V \cap U$.
Since $U \cap V = \emptyset$, $N^+_D(u) \cap N^+_D(v)=\emptyset$ and $N^-_D(u) \cap N^-_D(v)=\emptyset$.
Thus $u$ and $v$ are not adjacent in the niche graph of $D$.
\end{proof}
A {\it union} $G \cup H$ of two graphs $G$ and $H$ is the graph having its vertex set $V(G) \cup V(H)$ and edge set $E(G) \cup E(H)$.
In this paper, we mean by $G \cup H$ the disjoint union of $G$ and $H$.

Suppose that a graph $G$ is niche-realizable. Then $G$ is the niche graph of a bipartite tournament.
Let $(U,V)$ be its bipartition.
By Proposition~\ref{prop:noedge}, $G$ is a disjoint union of $G[U]$ and $G[V]$ and we have the following corollary.
\begin{Cor}\label{cor:at least two components}
If a graph is  niche-realizable, then it is a disjoint union of two graphs.
\end{Cor}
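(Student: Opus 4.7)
The plan is to read off the corollary essentially verbatim from Proposition~\ref{prop:noedge}. Assume $G$ is niche-realizable, so by definition there is a bipartite tournament $D$ whose niche graph is $G$. Fix a bipartition $(U,V)$ of $V(D)$; both $U$ and $V$ are nonempty since $D$ is an orientation of a (nontrivial) complete bipartite graph.

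Next I would invoke Proposition~\ref{prop:noedge}: no edge of $G$ has one endpoint in $U$ and the other in $V$. Therefore every edge of $G$ lies inside $G[U]$ or inside $G[V]$, and since $V(G)=U\sqcup V$, we have $G=G[U]\cup G[V]$ as a disjoint union. This exhibits $G$ as a disjoint union of the two graphs $G[U]$ and $G[V]$, proving the corollary.

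The only thing one needs to be careful about is the trivial bookkeeping that both $G[U]$ and $G[V]$ really count as ``two graphs'' in the intended sense, i.e.\ that neither vertex class of the underlying bipartition is empty; this is built into the definition of a bipartite tournament adopted in the paper, so there is no genuine obstacle. Accordingly, the proof is essentially a one-line consequence of the previous proposition, and I expect it to be written in a few lines rather than requiring any new idea.
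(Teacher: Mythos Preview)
Your proposal is correct and follows essentially the same approach as the paper: the paper's argument is simply to take the bipartition $(U,V)$ of the underlying bipartite tournament and invoke Proposition~\ref{prop:noedge} to conclude that $G$ is the disjoint union of $G[U]$ and $G[V]$. Your extra remark about nonemptiness of $U$ and $V$ is a reasonable clarification but is not made explicit in the paper.
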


Based on Proposition~\ref{prop:noedge}, we introduce the following definition.
\begin{Def}
Let $G_1$ and $G_2$ be two graphs with $m$ vertices and $n$ vertices, respectively.
The pair $(G_1,G_2)$ is said to be \emph{niche-realizable through $K_{m,n}$}
(in this paper, we only consider orientations of $K_{m,n}$
and so we omit ``through $K_{m,n}$")
if the disjoint union of $G_1$ and $G_2$ is
the niche graph of an orientation of the complete bipartite graph $K_{m,n}$
with bipartition $(V(G_1),V(G_2))$.
\end{Def}



Let $D$ be a bipartite tournament with bipartition $(U, V)$.
Then
\begin{equation}\label{eqn:equ1}
N_D^+(u)=N_D^+(v) \Leftrightarrow N_D^-(u)=N_D^-(v)
\end{equation}
and
\begin{equation}\label{eqn:equ2}
N_D^+(u)=N_D^-(v) \Leftrightarrow N_D^-(u)=N_D^+(v)
\end{equation}
for vertices $u$ and $v$ in the same partite set of $D$.
Moreover,
\begin{itemize}
\item[($\natural$)] $N^+_D(u) = N^+_D(v)$ implies that $u$ and $v$ belong to the same partite set of $D$.
\end{itemize}
We define a relation $\equiv_D$ on $V(D)$ by
\begin{equation}\label{eqn:equivalence}
u \equiv_D v \Leftrightarrow N^+_D(u)=N^+_D(v)
\end{equation}
for $u$ and $v$ in $V(D)$.
Clearly, $\equiv_D$ is an equivalence relation on $V(D)$.
For a vertex $u$ in $V(D)$, we denote the equivalence class containing $u$ by $[u]_D$.
It is obvious that
$
U \subset \bigcup_{u \in U}[u]_D \quad \text{and} \quad V \subset \bigcup_{v \in V}[v]_D.
$
Now we take a vertex $x$ in $\bigcup_{u \in U}[u]_D$.
Then $x \in [u]_D$ for some $u \in U$.
By definition, $N^+_D(u) = N^+_D(x)$.
By ($\natural$), $x \in U$, so $\bigcup_{u \in U}[u]_D \subset U$.
Therefore $U = \bigcup_{u \in U}[u]_D$.
By a similar argument, we also have
$V = \bigcup_{v \in V}[v]_D$.
Thus $$
U = \bigcup_{u \in U}[u]_D \quad \text{and} \quad V = \bigcup_{v \in V}[v]_D.
$$

Let $G$ be a graph.
Two vertices $u$ and $v$ of $G$ are said to be \emph{homogeneous} if they have the same closed neighborhood, and denoted by $u \equiv_G v$.
A clique $K$ of $G$ is said to be \emph{homogeneous} if the vertices in $K$ are mutually homogeneous.
In this paper, we mean by a clique of a graph $G$ a complete subgraph of $G$ or its vertex set.
A maximal homogeneous clique is said to be \emph{critical}.
It is easy to check that the relation $\equiv_G$ on $V(G)$ is an equivalence relation, and that every critical clique in $G$ is in fact equal to an equivalence class under $\equiv_G$.
Therefore any two critical cliques are vertex-disjoint.

Then, it is easy to check that, for the niche graph $G$ of $D$,
\begin{itemize}
\item[($\star$)] if $u \equiv_D v$, then $u$ and $v$ are adjacent and homogeneous in $G$.
\end{itemize}
We also introduce another relation $\RRR_D$ on $V(D)$ defined by
\[
u \, \RRR_D \, v \mbox{ if and only if $u$ and $v$ belong to the same partite set of $D$ and } N^+_D(u)=N^-_D(v)
\]
for $u$ and  $v$ in $V(D)$.
By \eqref{eqn:equ2}, $\RRR_D$ is symmetric.
For a vertex $u$ in $V(D)$, we let
\[
\RRR_D(u) = \{v \in V(D) \mid u \, \RRR_D \, v\}.
\]



\begin{Prop}\label{prop:relation}
Let 
$D$ be a bipartite tournament.
Then, for $u$ and $v$ in $V(D)$, the following are equivalent:
\begin{itemize}
  \item[(i)]  $u \, \RRR_D \, v$
  \item[(ii)] $\RRR_D(u)=[v]_D$
  \item[(iii)] $\RRR_D(v)=[u]_D$
\end{itemize}
\end{Prop}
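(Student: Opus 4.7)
The plan is to prove (i) $\Leftrightarrow$ (ii) directly and then deduce (i) $\Leftrightarrow$ (iii) by exploiting the symmetry of $\RRR_D$ that comes from \eqref{eqn:equ2}. Since $\RRR_D$ is symmetric, swapping $u$ and $v$ throughout turns the statement ``$u \, \RRR_D \, v \Leftrightarrow \RRR_D(u) = [v]_D$'' into ``$v \, \RRR_D \, u \Leftrightarrow \RRR_D(v) = [u]_D$'', which is exactly (i) $\Leftrightarrow$ (iii). So it is enough to focus on the first equivalence.

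For (ii) $\Rightarrow$ (i), I would simply observe that $v \in [v]_D$ since $\equiv_D$ is reflexive; thus $v \in \RRR_D(u)$, which by definition of $\RRR_D(u)$ means $u \, \RRR_D \, v$. This direction is essentially free.

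The content is in (i) $\Rightarrow$ (ii). Assume $u \, \RRR_D \, v$, so $u, v$ lie in the same partite set and $N^+_D(u) = N^-_D(v)$. I would prove $\RRR_D(u) = [v]_D$ by two inclusions. For $\RRR_D(u) \subseteq [v]_D$, take any $w \in \RRR_D(u)$; then $w$ is in the same partite set as $u$ and $N^+_D(u) = N^-_D(w)$, so $N^-_D(w) = N^-_D(v)$, and applying \eqref{eqn:equ1} gives $N^+_D(w) = N^+_D(v)$, i.e.\ $w \equiv_D v$. For $[v]_D \subseteq \RRR_D(u)$, take any $w \in [v]_D$; then $N^+_D(w) = N^+_D(v)$, so by \eqref{eqn:equ1} $N^-_D(w) = N^-_D(v) = N^+_D(u)$, and by property $(\natural)$ $w$ lies in the same partite set as $v$ and hence as $u$, so $u \, \RRR_D \, w$.

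I do not expect a real obstacle here; the proof is a bookkeeping exercise that just repeatedly invokes \eqref{eqn:equ1}, \eqref{eqn:equ2}, and $(\natural)$. The one point that needs a little care is checking the partite-set condition in the definition of $\RRR_D$ whenever moving between $\equiv_D$ and $\RRR_D$, which is exactly what $(\natural)$ is designed to supply.
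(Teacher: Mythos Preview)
Your proposal is correct and follows essentially the same route as the paper: prove (i) $\Leftrightarrow$ (ii) using \eqref{eqn:equ1}, \eqref{eqn:equ2}, and $(\natural)$, then invoke the symmetry of $\RRR_D$ for (i) $\Leftrightarrow$ (iii). The only cosmetic difference is that the paper packages the (i) $\Rightarrow$ (ii) direction as a single chain of biconditionals rather than two inclusions, but the underlying reasoning is identical.
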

\begin{proof}
Let $(U,V)$ be the bipartition of $D$.
We will show that (i) $\Leftrightarrow$ (ii).
To show the ``if'' part, take $u$ and $v$ in $V(D)$.
Suppose $\RRR_D(u)=[v]_D$. Since $v \in [v]_D$, $v \in \RRR_D(u)$ and so $u \, \RRR_D \, v$.
Thus the ``if'' part is true.

To show the ``only if'' part, suppose that $u \, \RRR_D \, v$ for some $u$ and $v$ in $V(D)$.
Then $u$ and $v$ belong to the same partite set of $D$ and $N^+_D(u)=N^-_D(v)$.
Without loss of generality, we may assume $u$ and $v$ belong to $U$.
By \eqref{eqn:equ2}, we have $N^-_D(u)=N^+_D(v)$.
Now we take $x \in V(D)$.
Then
\begin{align*}
x \in \RRR_D(u) &\Leftrightarrow x \in U \text{ and }  \ N^+_D(u)=N^-_D(x) \tag{the definition of $\RRR_D(u)$} \\
&\Leftrightarrow  x \in U \text{ and } \ N^-_D(u)=N^+_D(x) \tag{\eqref{eqn:equ2}} \\
&\Leftrightarrow N^+_D(v)=N^+_D(x) \tag{($\natural$) \text{and} $N^-_D(u)=N^+_D(v)$} \\
&\Leftrightarrow x \in [v]_D \tag{the definition of $\equiv_D$}
\end{align*}
and the ``only if'' part is true.


By the fact that (i) $\Leftrightarrow$ (ii), $v \, \RRR_D \, u \Leftrightarrow  \RRR_D(v) = [u]_D$.
Then, since $\RRR_D$ is symmetric,
(ii) $\Leftrightarrow$ (iii) is true.
\end{proof}

\begin{Cor}\label{cor:sameclass}
Let $D$ be a bipartite tournament.
If $u \, \RRR_D \, v$ and $v \, \RRR_D \, w$ for distinct $u,v,w \in V(D)$,
then $[u]_D=[w]_D$ and $u$ and $w$ are adjacent in the niche graph of $D$.
\end{Cor}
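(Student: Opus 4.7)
The plan is to leverage Proposition~\ref{prop:relation} twice to force a common equivalence class, and then invoke ($\star$) to obtain adjacency in the niche graph.

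First I would unpack the hypothesis using Proposition~\ref{prop:relation}. Since $u \, \RRR_D \, v$, the equivalence (i)$\Leftrightarrow$(iii) of that proposition gives $\RRR_D(v)=[u]_D$. Since $v \, \RRR_D \, w$, the equivalence (i)$\Leftrightarrow$(ii) of the same proposition (applied with roles $v$ for $u$ and $w$ for $v$) gives $\RRR_D(v)=[w]_D$. Comparing the two expressions for $\RRR_D(v)$ yields $[u]_D=[w]_D$, which is the first claim.

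Next, the second claim follows immediately from ($\star$): since $u$ and $w$ lie in the same equivalence class under $\equiv_D$, we have $u \equiv_D w$, and as $u$ and $w$ are distinct by hypothesis, $(\star)$ tells us that $u$ and $w$ are adjacent (in fact homogeneous) in the niche graph of $D$.

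I do not expect any substantive obstacle here: the corollary is essentially a bookkeeping consequence of Proposition~\ref{prop:relation}, and the only subtlety is making sure Proposition~\ref{prop:relation} is applied with the correct identifications of its variables so that both invocations produce the same set $\RRR_D(v)$. The distinctness hypothesis $u\neq w$ is needed only to make the adjacency conclusion non-vacuous when invoking ($\star$).
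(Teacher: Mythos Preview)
Your proof is correct and follows essentially the same route as the paper's own argument: two applications of Proposition~\ref{prop:relation} to conclude $\RRR_D(v)=[u]_D$ and $\RRR_D(v)=[w]_D$, hence $[u]_D=[w]_D$, and then an appeal to~($\star$) for adjacency. Your write-up is in fact slightly more explicit about which implications of Proposition~\ref{prop:relation} are being invoked and about the role of the distinctness hypothesis.
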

\begin{proof}
Suppose that $u \, \RRR_D \, v$ and $v \, \RRR_D \, w$ for some $u,v,w \in V(D)$.
Then, by Proposition~\ref{prop:relation}, $\RRR_D(v) = [u]_D$ and $\RRR_D(v) = [w]_D$.
Thus $[u]_D=[w]_D$.
Hence, by ($\star$), $u$ and $w$ are adjacent in the niche graph of $D$.
\end{proof}


\begin{Prop}\label{prop:notedge}
Let $G$ be the niche graph of a bipartite tournament $D$.
Then, for two vertices $u$ and $v$ in the same partite set of $D$, $uv \notin E(G)$ if and only if $u \, \RRR_D \, v$.
\end{Prop}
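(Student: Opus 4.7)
The plan is to unpack the definition of edge in a niche graph and exploit the fact that, inside a bipartite tournament, the out- and in-neighborhoods of any vertex in $U$ partition the opposite partite set $V$ (and similarly for vertices in $V$). Fix $u,v$ in the same partite set, say $U$. Then $N^+_D(u),N^-_D(u),N^+_D(v),N^-_D(v)\subseteq V$, with $N^+_D(u)\cup N^-_D(u)=V$ and $N^+_D(v)\cup N^-_D(v)=V$, and these two unions are disjoint for each vertex individually.

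For the ``only if'' direction, I would start from $uv\notin E(G)$, which by the definition of the niche graph gives simultaneously $N^+_D(u)\cap N^+_D(v)=\emptyset$ and $N^-_D(u)\cap N^-_D(v)=\emptyset$. Then I would intersect $N^+_D(v)\subseteq V=N^+_D(u)\cup N^-_D(u)$ with the two pieces: the intersection with $N^+_D(u)$ is empty, so $N^+_D(v)\subseteq N^-_D(u)$. Running the symmetric argument with $u$ and $v$ swapped gives $N^+_D(u)\subseteq N^-_D(v)$, and analogous inclusions for the in-neighborhoods. Combined these force $N^+_D(u)=N^-_D(v)$, which is exactly $u\,\RRR_D\,v$.

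For the ``if'' direction, assume $u\,\RRR_D\,v$, so $u,v$ lie in the same partite set and $N^+_D(u)=N^-_D(v)$; by \eqref{eqn:equ2} we also get $N^-_D(u)=N^+_D(v)$. Then
\[
N^+_D(u)\cap N^+_D(v)=N^-_D(v)\cap N^+_D(v)=\emptyset
\]
and similarly $N^-_D(u)\cap N^-_D(v)=N^+_D(v)\cap N^-_D(v)=\emptyset$, using only that in any digraph the in- and out-neighborhoods of a single vertex are disjoint. By the definition of the niche graph, $uv\notin E(G)$.

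I do not expect any real obstacle: the whole argument is a bookkeeping of four two-element intersections, and the key input is simply that $V$ is partitioned by $N^+_D(w)$ and $N^-_D(w)$ for each $w\in U$, together with the already-established equivalence \eqref{eqn:equ2}. The only point to be careful about is to use that $u$ and $v$ are in the same partite set in both directions (it is built into the definition of $\RRR_D$, and it is what allows the ``partition of $V$'' argument to work on both $u$ and $v$ simultaneously in the first direction).
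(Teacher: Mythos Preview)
Your proposal is correct and follows essentially the same approach as the paper: both start from the definition of the niche graph to translate $uv\notin E(G)$ into the two empty-intersection conditions, and then use the fact that $N^+_D(w)$ and $N^-_D(w)$ partition the opposite partite set to pass to $N^+_D(u)=N^-_D(v)$. The paper compresses this last step into a single sentence, whereas you spell out the inclusion-chasing explicitly, but the underlying argument is identical.
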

\begin{proof}
Take $u$ and $v$ in the same partite set of $D$.
By the definition of niche graph, $uv \notin E(G)$ if and only if $N^+_D(u) \cap N^+_D(v) = \emptyset$ and $N^-_D(u) \cap N^-_D(v) = \emptyset$.
Since $D$ is a bipartite tournament, the right hand side of the above equivalence is equivalent to $N^+_D(u) = N^-_D(v)$, which is equivalent to $u \, \RRR_D \, v$.
\end{proof}

\begin{Thm}\label{thm:alpha}
Let $(G_1,G_2)$ be a niche-realizable pair.
Then $\alpha(G_1) \le 2$ and $\alpha(G_2) \le 2$.
\end{Thm}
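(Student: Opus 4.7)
The plan is to argue by contradiction for $G_1$; the statement for $G_2$ follows by the symmetric argument applied to the other partite set. By the definition of niche-realizability, there is a bipartite tournament $D$ with bipartition $(V(G_1),V(G_2))$ whose niche graph is $G_1 \cup G_2$. Assume for contradiction that $\alpha(G_1)\ge 3$, and pick three (necessarily distinct) pairwise non-adjacent vertices $u_1,u_2,u_3$ of $G_1$.

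Since $u_1,u_2,u_3$ all lie in the partite set $V(G_1)$ of $D$, Proposition~\ref{prop:notedge} converts each missing edge $u_iu_j$ into the relation $u_i \, \RRR_D \, u_j$. In particular, $u_1 \, \RRR_D \, u_2$ and $u_2 \, \RRR_D \, u_3$ with $u_1,u_2,u_3$ distinct, so Corollary~\ref{cor:sameclass} applies and yields that $u_1$ and $u_3$ are adjacent in the niche graph of $D$. But then $u_1u_3\in E(G_1)$, contradicting the independence of $\{u_1,u_2,u_3\}$. Hence $\alpha(G_1)\le 2$, and by symmetry $\alpha(G_2)\le 2$.

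The proof really is just the composition Proposition~\ref{prop:notedge} $\Rightarrow$ Corollary~\ref{cor:sameclass} $\Rightarrow$ contradiction, so there is no genuine obstacle; the only point requiring a moment of care is the distinctness hypothesis of Corollary~\ref{cor:sameclass}, which is automatic because the three vertices of an independent set in a simple graph are distinct.
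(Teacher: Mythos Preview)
Your proof is correct and follows essentially the same approach as the paper: assume an independent set of size three in $G_1$, apply Proposition~\ref{prop:notedge} to get the $\RRR_D$ relations, and then invoke Corollary~\ref{cor:sameclass} to derive an adjacency contradiction. Your explicit remark about the distinctness hypothesis of Corollary~\ref{cor:sameclass} is a nice touch that the paper leaves implicit.
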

\begin{proof}
Since $(G_1,G_2)$ is a niche-realizable pair, $G_1 \cup G_2$ is the niche graph of a bipartite tournament $D$.
By symmetry, it suffices to show that $\alpha(G_1) \le 2$.
If $\{u, v, w\}$ is an independent set of $G_1$,
then $u \, \RRR_D \, v$ and $v \, \RRR_D \, w$ by Proposition~\ref{prop:notedge} and so, by Corollary~\ref{cor:sameclass}, $u$ and $w$ are adjacent in $G_1$, which is a contradiction.
Hence $\alpha(G_1) \le 2$.
\end{proof}

The following four corollaries immediately follow from Theorem~\ref{thm:alpha}.

\begin{Cor}\label{cor:components4}
Let $(G_1,G_2)$ be a niche-realizable pair.
Then each of $G_1$ and $G_2$ has at most two components.
\end{Cor}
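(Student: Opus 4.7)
The plan is to derive this directly from Theorem~\ref{thm:alpha}. Since $(G_1,G_2)$ is niche-realizable, that theorem guarantees $\alpha(G_1) \le 2$ and $\alpha(G_2) \le 2$, so it suffices to translate a large number of components into a large independent set.

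First I would argue, by symmetry, that it is enough to show $G_1$ has at most two components. Suppose for contradiction that $G_1$ has at least three components $C_1, C_2, C_3$. Then I would pick one vertex $v_i$ from each $C_i$ for $i=1,2,3$. Since vertices lying in different components of a graph cannot be adjacent, the set $\{v_1, v_2, v_3\}$ is independent in $G_1$, yielding $\alpha(G_1) \ge 3$ and contradicting Theorem~\ref{thm:alpha}.

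There is essentially no obstacle here: the whole content is the reduction of ``many components'' to ``large independent set,'' and this is a one-line observation. The same argument applied to $G_2$ completes the proof, so the corollary follows immediately.
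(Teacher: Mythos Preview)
Your argument is correct and matches the paper's approach: the paper states this corollary as an immediate consequence of Theorem~\ref{thm:alpha}, and your reduction of ``at least three components'' to ``an independent set of size three'' is exactly the intended one-line justification.
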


\begin{Cor}\label{cor:alpha2}
Let $G$ be a niche-realizable graph and $H$ be a component of $G$.
Then $\alpha(H) \le 2$.
\end{Cor}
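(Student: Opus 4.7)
The plan is to reduce the statement to Theorem~\ref{thm:alpha} by exhibiting the component $H$ as living inside one of the two ``halves'' of the niche-realizable pair associated with $G$.

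First I would unpack niche-realizability: since $G$ is niche-realizable, there is a bipartite tournament $D$ with bipartition $(U,V)$ whose niche graph is $G$. By Proposition~\ref{prop:noedge}, no edge of $G$ joins $U$ to $V$, so $G = G[U] \cup G[V]$ as a disjoint union, and therefore $(G[U],G[V])$ is a niche-realizable pair. Next, for any component $H$ of $G$, connectedness of $H$ forces $V(H) \subseteq U$ or $V(H) \subseteq V$, so $H$ is an induced subgraph of $G[U]$ or of $G[V]$ (in fact, a connected component of whichever one it lies in).

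The final step invokes monotonicity of independence number under taking induced subgraphs: if $H$ sits inside $G[U]$, then $\alpha(H) \le \alpha(G[U])$, and Theorem~\ref{thm:alpha} gives $\alpha(G[U]) \le 2$; symmetrically if $H \subseteq G[V]$. Either way, $\alpha(H) \le 2$.

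There is no real obstacle here; the only thing to be a little careful about is making sure one explicitly identifies the niche-realizable \emph{pair} $(G[U],G[V])$ coming from the bipartition of the underlying bipartite tournament, so that Theorem~\ref{thm:alpha} can be applied. After that the conclusion is immediate from the independence-number bound for each of $G[U]$ and $G[V]$.
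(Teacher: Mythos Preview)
Your proposal is correct and matches the paper's approach: the paper states that this corollary follows immediately from Theorem~\ref{thm:alpha}, and your argument is precisely the unpacking of that implication via the niche-realizable pair $(G[U],G[V])$ and the monotonicity of $\alpha$ under induced subgraphs.
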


\begin{Cor}
A niche-realizable graph is $K_{1,3}$-free.
\end{Cor}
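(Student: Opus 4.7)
The plan is to deduce the corollary directly from Corollary~\ref{cor:alpha2}. Suppose, for contradiction, that a niche-realizable graph $G$ contains an induced $K_{1,3}$, that is, a vertex $x$ together with three neighbors $y_1,y_2,y_3$ that are pairwise non-adjacent in $G$. Since $K_{1,3}$ is connected, all four of these vertices lie in a single component $H$ of $G$.

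Now the three leaves $\{y_1,y_2,y_3\}$ form an independent set in $H$ of size $3$, so $\alpha(H)\ge 3$. This contradicts Corollary~\ref{cor:alpha2}, which states that every component of a niche-realizable graph has independence number at most $2$. Hence no such induced claw can exist, proving the statement.

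The main (and essentially only) obstacle is recognizing that the induced $K_{1,3}$ must be contained in one component, but this is immediate from connectedness of $K_{1,3}$. So the proof amounts to little more than citing Corollary~\ref{cor:alpha2} applied to the component containing the alleged claw.
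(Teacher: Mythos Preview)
Your proof is correct and essentially matches the paper's approach: the paper states that this corollary follows immediately from Theorem~\ref{thm:alpha} (of which Corollary~\ref{cor:alpha2} is a direct consequence), and your argument spells out exactly that implication. The only cosmetic difference is that you pass through the component $H$ via Corollary~\ref{cor:alpha2} rather than citing Theorem~\ref{thm:alpha} directly, but the content is the same.
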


An independent set of three vertices such that each pair is joined by a path that avoids the neighborhood of the third is called an \emph{asteroidal triple}.

\begin{Cor}\label{cor:AT-free}
A niche-realizable graph does not contain an asteroidal triple as an induced subgraph.
\end{Cor}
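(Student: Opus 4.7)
The plan is to deduce this directly from Corollary~\ref{cor:alpha2}, which asserts that every component of a niche-realizable graph has independence number at most $2$. An asteroidal triple is, by the very definition stated just above the corollary, an independent set of three vertices, so if we can force the three vertices of such a triple to lie in a common component, we obtain an immediate contradiction.

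The key observation I would make is that the three vertices of an asteroidal triple are pairwise joined by paths (each avoiding the neighborhood of the third), which forces them to lie in the same connected component $H$ of $G$. Since they form an independent set, we conclude $\alpha(H)\ge 3$, contradicting Corollary~\ref{cor:alpha2}.

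Concretely, I would argue by contradiction: suppose a niche-realizable graph $G$ contains an induced asteroidal triple $\{u,v,w\}$. The definition supplies a $u$-$v$ path, a $v$-$w$ path, and a $u$-$w$ path in $G$, so $u,v,w$ all lie in the same component $H$ of $G$. Being an asteroidal triple, $\{u,v,w\}$ is independent in $G$ and hence in $H$, whence $\alpha(H)\ge 3$. This contradicts Corollary~\ref{cor:alpha2}, completing the proof.

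There is essentially no main obstacle here, since the corollary is a one-line consequence of the independence-number bound together with the fact that an asteroidal triple forces connectivity among its members; the only thing to be careful about is noting explicitly that the path condition in the definition of asteroidal triple is what places the three vertices in a single component, so that Corollary~\ref{cor:alpha2} (which bounds $\alpha$ of a component, not of $G$ itself) can be applied.
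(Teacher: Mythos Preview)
Your argument is correct and matches the paper's intended reasoning: the paper lists Corollary~\ref{cor:AT-free} among the results that ``immediately follow from Theorem~\ref{thm:alpha}'' and gives no separate proof, and your deduction via Corollary~\ref{cor:alpha2} (itself an immediate consequence of Theorem~\ref{thm:alpha}) is exactly the one-line argument they have in mind. Your care in noting that the path condition forces the three vertices into a single component is the only point that needs to be made explicit, and you handle it correctly.
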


A \emph{hole} of a graph is an induced cycle of length greater than or equal to four.

\begin{Cor}
A niche-realizable graph is chordal if and only if it is interval.
\end{Cor}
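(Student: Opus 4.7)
The plan is to invoke the classical Lekkerkerker--Boland characterization of interval graphs as exactly the chordal AT-free graphs, and combine it with Corollary~\ref{cor:AT-free} which we already have in hand.

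First, I would dispose of the ``if'' direction, which is immediate and requires nothing about niche-realizability: every interval graph is chordal, since in any interval representation a shortest induced cycle of length at least four would force two non-consecutive intervals on the cycle to overlap, producing a chord. This is completely standard and I would cite it in one line.

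For the ``only if'' direction, suppose $G$ is a niche-realizable graph that is chordal. By Corollary~\ref{cor:AT-free}, $G$ contains no asteroidal triple. The Lekkerkerker--Boland theorem states that a graph is interval if and only if it is chordal and AT-free, so $G$ is interval.

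There is essentially no obstacle here; the whole statement is a two-line consequence of Corollary~\ref{cor:AT-free} together with a classical theorem. The only ``choice'' is whether to prove $G$ interval by directly constructing an interval representation from its structure (which would be longer and would duplicate the Lekkerkerker--Boland argument) or simply to quote Lekkerkerker--Boland. Since the paper is phrased as a short corollary immediately after Corollary~\ref{cor:AT-free}, quoting Lekkerkerker--Boland is clearly the intended route.
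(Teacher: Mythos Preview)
Your proposal is correct and matches the paper's own proof essentially verbatim: the paper also dismisses the ``if'' part as obvious and, for the ``only if'' part, combines chordality with Corollary~\ref{cor:AT-free} and then invokes the Lekkerkerker--Boland characterization (cited as \cite{LB}) to conclude that $G$ is interval.
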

\begin{proof}
The ``if'' part is obvious.
To show the ``only if'' part, suppose that
a niche-realizable graph $G$ is chordal.
Then $G$ does not contain a hole.
By Corollary~\ref{cor:AT-free}, $G$ does not contain an asteroidal triple as an induced subgraph.
By the characterization of an interval graph given in \cite{LB}, $G$ is interval.
\end{proof}

\begin{Prop}\label{prop:induced P3}
If a graph is niche-realizable, then it has no induced path of length three.
\end{Prop}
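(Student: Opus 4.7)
The plan is to argue by contradiction: assume the niche graph $G$ of some bipartite tournament $D$ with bipartition $(U,V)$ contains an induced $P_4$, say $a-b-c-d$ (so $ab,bc,cd \in E(G)$ and $ac,ad,bd \notin E(G)$), and derive a structural contradiction using the $\RRR_D$ and $\equiv_D$ machinery built up earlier.

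First, by Proposition~\ref{prop:noedge}, $G$ has no edges between $U$ and $V$, and since $a,b,c,d$ are joined by a connected path in $G$, they all lie in the same partite set; without loss of generality, $\{a,b,c,d\} \subseteq U$. Next, applying Proposition~\ref{prop:notedge} to each of the three non-edges $ac$, $ad$, $bd$ (all pairs in $U$) yields $a\,\RRR_D\,c$, $a\,\RRR_D\,d$, and $b\,\RRR_D\,d$.

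The key step is to exploit $a\,\RRR_D\,c$ together with $a\,\RRR_D\,d$. Using symmetry of $\RRR_D$, this gives $c\,\RRR_D\,a$ and $a\,\RRR_D\,d$, so Corollary~\ref{cor:sameclass} applies to the distinct triple $c,a,d$ and produces $[c]_D = [d]_D$; equivalently, $c \equiv_D d$. By property ($\star$), this forces $c$ and $d$ to be \emph{homogeneous} in $G$, i.e.\ $N_G[c]=N_G[d]$. But the induced $P_4$ structure gives $b \in N_G(c)$ while $b \notin N_G[d]$ (since $b \neq d$ and $bd \notin E(G)$), contradicting homogeneity. Hence no induced $P_4$ can occur.

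I expect no real obstacle: the main content is just bookkeeping, deciding which of the three $\RRR_D$ relations to combine. Using the pair that shares the common vertex $a$ (to invoke Corollary~\ref{cor:sameclass}) is the cleanest route; alternatively, one could combine $b\,\RRR_D\,d$ with $a\,\RRR_D\,d$ to get $[a]_D = [b]_D$ and contradict the existence of the non-edge $ac$ via ($\star$), but the first route seems slightly more direct.
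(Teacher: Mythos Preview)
Your proof is correct and follows essentially the same route as the paper's. The paper labels the induced path $xyzw$, uses the non-edges $xz$ and $xw$ to obtain $x\,\RRR_D\,z$ and $x\,\RRR_D\,w$, applies Corollary~\ref{cor:sameclass} to get $[z]_D=[w]_D$, and then uses $(\star)$ together with the edge $yz$ to force the contradictory edge $yw$ --- exactly your argument with $a,b,c,d$ in place of $x,y,z,w$ and the contradiction phrased via homogeneity rather than the equivalent adjacency statement.
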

\begin{proof}
Let $G$ be a niche-realizable graph. Then $G$ is the niche graph of a bipartite tournament, say $D$.
Let $(U,V)$ be the bipartition of $D$. Let $G_1$ and $G_2$ be the subgraphs of $G$ induced by $U$ and $V$, respectively.
Suppose, to the contrary, that $G$ contains an induced path $P:=xyzw$.
Without loss of generality, we may assume $P$ is a path of $G_1$.
Since $P$ is an induced path, by Proposition~\ref{prop:notedge}, $x \, \RRR_D \, z$ and $x \, \RRR_D \, w$.
By Corollary~\ref{cor:sameclass}, $[z]_D=[w]_D$.
Thus, by the fact that $y$ and $z$ are adjacent on $P$, $w$ and $y$ are adjacent in $G$, which contradicts the assumption that $P$ is an induced path.
\end{proof}

The following corollary is an immediate consequence of Proposition~\ref{prop:induced P3}.

\begin{Cor}\label{diameter}
Every component of a niche-realizable has diameter at most two.
\end{Cor}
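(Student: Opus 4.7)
The plan is to deduce the corollary from Proposition~\ref{prop:induced P3} by the standard argument that ``no induced $P_4$'' forces small diameter within each component. Let $G$ be niche-realizable and let $H$ be a component of $G$. I would argue by contradiction and suppose that $H$ contains two vertices $u$ and $v$ with $d_H(u,v) \ge 3$.

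Since $H$ is a component, there is a shortest $u$-$v$ path in $H$; its length is at least $3$, so it contains an initial segment $x_0 x_1 x_2 x_3$ with $x_0 = u$. I would then verify that this segment is an \emph{induced} path of length three. The key observation is that any chord would provide a shorter walk between two of the vertices $x_0, x_1, x_2, x_3$, contradicting the choice of a shortest $u$-$v$ path: $x_0 x_2 \notin E(G)$ because $d_H(x_0, x_2) = 2$ along the path, $x_1 x_3 \notin E(G)$ for the analogous reason, and $x_0 x_3 \notin E(G)$ because $d_H(x_0, x_3) = 3$. Hence $x_0 x_1 x_2 x_3$ is an induced path of length $3$ in $G$, contradicting Proposition~\ref{prop:induced P3}. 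This forces $\mathrm{diam}(H) \le 2$.

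The argument is essentially routine once Proposition~\ref{prop:induced P3} is in hand; the only step requiring a line of justification is the no-chord check, and even that follows immediately from the minimality of the chosen $u$-$v$ path. So I do not expect any substantive obstacle.
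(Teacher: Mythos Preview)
Your argument is correct and is exactly the routine deduction the paper has in mind: it states the corollary as ``an immediate consequence of Proposition~\ref{prop:induced P3}'' without writing out any details, and your shortest-path/no-chord check is precisely the standard way to unpack that implication.
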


\begin{Prop} \label{prop:complement forbidden path}
If a graph $G$ is niche-realizable, then each of the components of $\overline{G}$ has diameter at most two.
\end{Prop}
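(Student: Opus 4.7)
The plan is to prove the stronger statement that $\overline{G}$ itself has diameter at most two, from which the conclusion about each component is immediate. I would begin by letting $D$ be a bipartite tournament with bipartition $(U,V)$ whose niche graph is $G$, and invoke Proposition~\ref{prop:noedge}: no edge of $G$ joins a vertex of $U$ with a vertex of $V$. This means every pair $\{u,v\}$ with $u\in U$ and $v\in V$ is an edge of $\overline{G}$, so $\overline{G}$ contains the complete bipartite graph on parts $U$ and $V$ as a spanning subgraph.

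Next I would use this spanning structure to bound the diameter of $\overline{G}$. If both $U$ and $V$ are nonempty, then any two vertices lying in opposite partite sets are adjacent in $\overline{G}$, while any two vertices lying in the same partite set are joined by a path of length two through an arbitrary vertex of the opposite partite set. If one of $U,V$ is empty, then $D$ has no arcs, so $G$ is edgeless and $\overline{G}$ is complete, giving diameter at most one. In either case $\overline{G}$ has diameter at most two, and since each of its components is an induced subgraph whose diameter does not exceed that of $\overline{G}$, the conclusion follows.

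The argument is essentially immediate once Proposition~\ref{prop:noedge} is in hand; no further structural properties of niche graphs are used. The only point requiring care is the degenerate case where one partite set is empty, which is handled separately as above. Thus the main ``obstacle'' is not a difficulty but simply the observation that bipartite-ness of $D$ already forces $\overline{G}$ to contain a spanning complete bipartite subgraph, which is the whole content of the proposition.
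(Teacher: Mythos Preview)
Your proof is correct and follows essentially the same approach as the paper's: both arguments invoke Proposition~\ref{prop:noedge} to see that vertices in different partite sets of $D$ are adjacent in $\overline{G}$, and then route any two same-side vertices through a vertex of the opposite side to get distance at most two. You are slightly more careful in explicitly treating the degenerate case where one partite set is empty, which the paper's proof tacitly assumes does not occur.
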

\begin{proof}
Let $G$ be a niche-realizable graph.
Then $G$ is the niche graph of a bipartite tournament $D$.
Take two vertices $x$ and $y$ in $G$.
If $x$ and $y$ belong to distinct partite sets od $D$, then they are not adjacent in $G$ by Proposition~\ref{prop:noedge} and so they are adjacent in $\overline{G}$.
Therefore, if $x$ and $y$ belong to distinct partite sets of $D$, then $d_{\overline{G}}(x,y)=1$.
Suppose that $x$ and $y$ belong to the same partite set of $D$.
Then we take a vertex $z$ in the other partite set of $D$.
Then $z$ is adjacent to both $x$ and $y$ in $\overline{G}$ by the previous observation, so $d_{\overline{G}}(x,y) \le 2$.
\end{proof}

\section{Characterizations of niche-realizable graphs}
In this section, we completely characterize niche-realizable graphs.
By Corollaries~\ref{cor:at least two components} and~\ref{cor:components4}, every niche-realizable graph has at least two and at most four components.
Based on this observation, we first characterize a niche-realizable graph with three or four components by showing that it is niche-realizable if and only if each of its components is complete.
Then we completely characterize a niche-realizable graph with exactly two components, which is the main result of this section.

Given a digraph $D$ and vertex sets $S$ and $T$ of $D$, we denote the set of arcs from $S$ to $T$ by $[S, T]$, that is, $[S, T] =\{(x,y) \in A(D)  \mid x \in S \text{ and } y \in  T\}$.

\begin{Lem}\label{lem:K_iKm-iKn}
For positive integers $i$, $j$, and $k$,
the pair $(K_i \cup K_{j}, K_k)$ is niche-realizable. 
\end{Lem}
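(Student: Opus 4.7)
The plan is to give an explicit construction of an orientation of $K_{i+j,k}$ whose niche graph is the disjoint union $K_i\cup K_j\cup K_k$. I would set the bipartition as $(U,W)$ where $U=A\cup B$ with $|A|=i$, $|B|=j$, and $|W|=k$; the intended three complete components of the niche graph will sit on $A$, $B$, and $W$.

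The construction is the following: orient every edge of $K_{i+j,k}$ so that each arc from $A$ points into $W$ and each arc from $W$ points into $B$. In other words, $[A,W]=A\times W$ and $[W,B]=W\times B$ in the resulting bipartite tournament $D$. Under this orientation, every vertex in $A$ has out-neighborhood $W$ and empty in-neighborhood; every vertex in $B$ has in-neighborhood $W$ and empty out-neighborhood; and every vertex in $W$ has in-neighborhood $A$ and out-neighborhood $B$.

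Next I would verify, edge by edge, that the niche graph of $D$ is exactly $K_i\cup K_j\cup K_k$. For any two vertices in $A$, their common out-neighborhood is all of $W$, which is nonempty since $k\ge 1$, so they are adjacent in the niche graph; similarly any two vertices of $B$ share $W$ as a common in-neighborhood and are adjacent. For two vertices $w,w'\in W$, they share $A$ as a common in-neighborhood (nonempty since $i\ge 1$), so they are adjacent. For $a\in A$ and $b\in B$, one checks that $N^+_D(a)\cap N^+_D(b)=W\cap\varnothing=\varnothing$ and $N^-_D(a)\cap N^-_D(b)=\varnothing\cap W=\varnothing$, so they are not adjacent. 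Finally, Proposition~\ref{prop:noedge} immediately rules out any edge between $U$ and $W$ in the niche graph. Hence the niche graph of $D$ is precisely the disjoint union of the complete graphs on $A$, $B$, and $W$, which proves that $(K_i\cup K_j,K_k)$ is niche-realizable.

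There is no real obstacle here; the argument is a direct construction, and the only place that requires a moment of care is checking the edge cases $i=1$, $j=1$, or $k=1$, where some of the complete components are trivial and one has to verify that no spurious edges appear. In each such case the verification above still goes through verbatim, since we only need $k\ge 1$ for the $A$- and $B$-cliques to form and $i\ge 1$ (or $j\ge 1$) for the $W$-clique to form.
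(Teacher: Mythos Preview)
Your construction is exactly the one given in the paper (with $A=S$, $B=U\setminus S$, $W=V$), and your edge-by-edge verification of the niche graph is correct and slightly more detailed than the paper's one-line claim. This is essentially the same proof.
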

\begin{proof}
Fix positive integers $i$, $j$, and $k$. 
We define a bipartite tournament $D$ with bipartition $(U, V)$ satisfying $|U|=i+j$, $|V|=k$ in the following.
Let $S$ be a subset of $U$ of size $i$ and $[S, V] \cup [V, U\setminus S]$ be the arc set of $D$.
Then the niche graph of $D$ is $(K_i \cup K_{j}) \cup K_k$. Thus the pair $(K_i \cup K_{j}, K_k)$ is niche-realizable.
\end{proof}

\begin{Lem}\label{lem:K_iK_m-iK_jk_m-j}
For positive integers $i$, $j$, $k$, and $l$, 
the pair $(K_i \cup K_{j}, K_k \cup K_l)$ is niche-realizable. 
\end{Lem}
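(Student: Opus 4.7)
The plan is to give an explicit bipartite tournament whose niche graph is the desired disjoint union. Following the pattern of Lemma~\ref{lem:K_iKm-iKn}, I would partition the two partite sets into two pieces each and arrange the arcs in a ``cyclic'' fashion so that each of the four pieces is a critical clique of the resulting niche graph, while no edges appear between different pieces.

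Concretely, I would take disjoint sets $S_1, S_2, T_1, T_2$ with $|S_1|=i$, $|S_2|=j$, $|T_1|=k$, $|T_2|=l$, and define $D$ to be the bipartite tournament with bipartition $(U,V)$ where $U = S_1 \cup S_2$ and $V = T_1 \cup T_2$, and with arc set
\[
[S_1, T_1] \cup [T_1, S_2] \cup [S_2, T_2] \cup [T_2, S_1].
\]
First I would check that this is indeed an orientation of $K_{|U|,|V|}$: every pair in $S_a \times T_b$ receives exactly one arc because the four arc-sets above partition $U \times V \cup V \times U$ appropriately. Then I would compute $N^+_D$ and $N^-_D$ for each of the four pieces: every vertex of $S_1$ has out-neighborhood $T_1$ and in-neighborhood $T_2$; every vertex of $S_2$ has out-neighborhood $T_2$ and in-neighborhood $T_1$; every vertex of $T_1$ has out-neighborhood $S_2$ and in-neighborhood $S_1$; every vertex of $T_2$ has out-neighborhood $S_1$ and in-neighborhood $S_2$.

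From these neighborhoods I would read off the niche graph. Any two vertices in the same piece share both $N^+_D$ and $N^-_D$, so each piece induces a clique, producing $K_i, K_j, K_k, K_l$ on $S_1, S_2, T_1, T_2$ respectively. For vertices $u \in S_1$ and $v \in S_2$, one has $N^+_D(u) \cap N^+_D(v) = T_1 \cap T_2 = \emptyset$ and $N^-_D(u) \cap N^-_D(v) = T_2 \cap T_1 = \emptyset$, so $uv \notin E(G)$; the analogous computation for $T_1$ versus $T_2$ shows no edges between those two cliques either. Finally, Proposition~\ref{prop:noedge} rules out any edges between $U$ and $V$.

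There is no real obstacle here: once the ``cyclic'' orientation is written down, every verification is immediate. The only thing to be careful about is bookkeeping — making sure the four arc-sets together cover all pairs in $U \times V$ exactly once and that the four pieces are genuinely pairwise non-adjacent in the niche graph — but this is routine.
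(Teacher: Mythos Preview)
Your construction is exactly the one in the paper: with $S=S_1$, $U\setminus S=S_2$, $T=T_1$, $V\setminus T=T_2$, your arc set coincides with the paper's $[S,T]\cup[T,U\setminus S]\cup[U\setminus S,V\setminus T]\cup[V\setminus T,S]$. The paper merely says ``it is easy to check'' where you spell out the neighborhood computations, so your write-up is in fact more detailed than the original.
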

\begin{proof}
Fix positive integers $i$, $j$, $k$, and $l$.
We define a bipartite tournament $D$ with bipartition $(U,V)$, where $|U|=i+j$ and $|V|=k+l$ as follows.

Let $S$ and $T$ be subsets of $U$ and $V$, respectively, where $|S|=i$, $|T|=k$ and let
\[A(D)=[S,T] \cup [T,U\setminus S] \cup [U\setminus S, V\setminus T] \cup [V\setminus T, S].\]
Then it is easy to check that the niche graph of $D$ is $(K_i \cup K_{j}) \cup (K_k \cup K_{l})$.
Thus the pair $(K_i \cup K_{j}, K_k \cup K_{l})$ is niche-realizable.
\end{proof}

\begin{Lem}\label{lem:completecomponent}
Suppose that the disjoint union of two graphs $G_1$ and $G_2$ is the niche graph of a bipartite tournament $D$ with bipartition $(U, V)$ such that $G_1 = G[U]$ and $G_2 = G[V]$.
If $G_1$ has exactly two components, then those two components are complete graphs with vertex sets $[x]_D$ and $[y]_D$, respectively, for some distinct vertices $x$ and $y$ in $G_1$.
\end{Lem}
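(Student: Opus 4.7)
The plan is to identify each of the two components of $G_1$ with an equivalence class under the relation $\equiv_D$, and then invoke ($\star$) to conclude that each component is complete. Let $C_1$ and $C_2$ be the two components of $G_1$, and fix $x \in V(C_1)$ and $y \in V(C_2)$. Since $x$ and $y$ both lie in the partite set $U$ but in distinct components of $G_1 = G[U]$, they are non-adjacent in $G$. Proposition~\ref{prop:notedge} therefore gives $x \,\RRR_D\, y$, and Proposition~\ref{prop:relation} upgrades this to the identities $\RRR_D(y) = [x]_D$ and $\RRR_D(x) = [y]_D$.

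The central step is to verify $V(C_1) = [x]_D$ (and, by symmetry, $V(C_2) = [y]_D$). For the inclusion $V(C_1) \subseteq [x]_D$, take any $x' \in V(C_1)$; the case $x' = x$ is trivial, while for $x' \neq x$ the vertex $x'$ sits in a different component from $y$, so $x'y \notin E(G)$, and Proposition~\ref{prop:notedge} yields $x' \,\RRR_D\, y$, i.e., $x' \in \RRR_D(y) = [x]_D$. For the reverse inclusion $[x]_D \subseteq V(C_1)$, take any $z \in [x]_D$: by ($\natural$) we have $z \in U$, and if $z \neq x$ then ($\star$) tells us $z$ is adjacent to $x$ in $G$, which forces $z$ to lie in the same component of $G_1 = G[U]$ as $x$, namely $V(C_1)$.

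Once the equalities $V(C_1) = [x]_D$ and $V(C_2) = [y]_D$ are in hand, property ($\star$) says that all vertices within a single equivalence class of $\equiv_D$ are pairwise adjacent in $G$, so both $C_1$ and $C_2$ are complete graphs, which is the desired conclusion.

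I do not anticipate a serious obstacle; the statement is essentially a direct bookkeeping exercise with the relations $\equiv_D$ and $\RRR_D$ introduced in Section~2. The only mildly delicate point is handling the trivial case $x' = x$ in the forward inclusion, and making sure to invoke ($\natural$) to keep the analysis inside the partite set $U$ before applying ($\star$) for the conclusion of completeness.
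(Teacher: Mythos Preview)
Your proof is correct and follows essentially the same approach as the paper: both identify the two components with the equivalence classes $[x]_D$ and $[y]_D$ via the relation $\RRR_D$ and Proposition~\ref{prop:notedge}. The only cosmetic differences are that the paper invokes Theorem~\ref{thm:alpha} up front to get completeness (two components plus $\alpha(G_1)\le 2$ forces each component to be complete) and uses Corollary~\ref{cor:sameclass} together with a partition-counting step for the reverse inclusion, whereas you appeal to Proposition~\ref{prop:relation} directly and defer completeness to the end via $(\star)$; neither variant is materially different.
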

\begin{proof}
Suppose $G_1$ has two components $X_1$ and $X_2$.
By Theorem~\ref{thm:alpha}, $X_1$ and $X_2$ are complete.
Let $x$ and $y$ be vertices in $X_1$ and $X_2$, respectively.
Since $x$ and $y$ are nonadjacent in $G_1$, $x \, \RRR_D \, y$ by Proposition~\ref{prop:notedge}.
Now take a vertex $z$ in $X_1$.
Then $z \, \RRR_D \, y$ by Proposition~\ref{prop:notedge}.
Thus $x \, \RRR_D \, y$ and $z \, \RRR_D \, y$ and so, by Corollary~\ref{cor:sameclass}, $[x]_D = [z]_D$.
By symmetry, for any vertex $w$ in $X_2$, $[y]_D = [w]_D$.
Therefore $V(X_1) \subset [x]_D$ and $V(X_2) \subset [y]_D$.
Thus
\[
U = V(X_1) \dot \cup V(X_2) \subset [x]_D \dot \cup [y]_D \subset U,
\]
where $\dot \cup$ means a disjoint union of two sets,
and so $V(X_1) = [x]_D$ and $V(X_2) = [y]_D$.
\end{proof}

\begin{Thm}\label{thm:component:new}
Let $G$ be a graph having three or four components.
Then $G$ is niche-realizable if and only if each of its components is complete.
\end{Thm}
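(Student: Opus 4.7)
The strategy is to split on the two implications and, within the forward direction, on whether $G$ has three or four components.

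The converse is direct from the construction lemmas already proved. If every component of $G$ is complete, then $G$ is either $K_i \cup K_j \cup K_m$ or $K_i \cup K_j \cup K_k \cup K_l$, and in these two cases Lemma~\ref{lem:K_iKm-iKn} or Lemma~\ref{lem:K_iK_m-iK_jk_m-j} respectively exhibits the required bipartite tournament whose niche graph is $G$.

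For the forward direction, let $D$ be a bipartite tournament with bipartition $(U,V)$ whose niche graph is $G$. By Proposition~\ref{prop:noedge} we may write $G = G[U] \cup G[V]$, and by Corollary~\ref{cor:components4} each of $G[U]$ and $G[V]$ has at most two components. The four-component case is immediate: both $G[U]$ and $G[V]$ must have exactly two components, so Lemma~\ref{lem:completecomponent} applied to each side certifies that all four components are complete.

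The main obstacle is the three-component case, where without loss of generality $G[U]$ has two components and $G[V]$ has one. Lemma~\ref{lem:completecomponent} already gives $U = [x]_D \cup [y]_D$ with each equivalence class inducing a clique; since $x$ and $y$ sit in distinct components of $G$, Proposition~\ref{prop:notedge} yields $x \, \RRR_D \, y$. To show $G[V]$ is complete, I would first observe that for each $v \in V$ the in-neighborhood $N^-_D(v)$ is a union of $\equiv_D$-classes (because $u \equiv_D u'$ forces $u \to v \Leftrightarrow u' \to v$), so $N^-_D(v) \in \{\emptyset, [x]_D, [y]_D, U\}$. The options $\emptyset$ and $U$ are ruled out: they would make $v$ either a common out-neighbor or a common in-neighbor of $x$ and $y$, contradicting $xy \notin E(G)$. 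This partitions $V$ into two blocks $P := \{v : N^-_D(v) = [x]_D\}$ and $Q := \{v : N^-_D(v) = [y]_D\}$, and a short check shows that any two vertices in $P$ share $x$ as an in-neighbor (hence are adjacent), similarly for $Q$, while any $v \in P$ and $v' \in Q$ satisfy $N^-_D(v) \cap N^-_D(v') = \emptyset = N^+_D(v) \cap N^+_D(v')$ and are therefore non-adjacent. Thus $G[V] = K_{|P|} \cup K_{|Q|}$, and connectedness of $G[V]$ forces one of $P, Q$ to be empty, making $G[V]$ complete as required.
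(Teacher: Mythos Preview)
Your proof is correct and follows essentially the same approach as the paper: the paper also reduces to $U = [x]_D \,\dot\cup\, [y]_D$ via Lemma~\ref{lem:completecomponent} and then partitions $V$ into $N^+_D(x)$ and $N^-_D(x)$ (which coincide exactly with your $P$ and $Q$), showing these are the complete pieces of $G[V]$. The only organizational difference is that the paper treats the three- and four-component cases uniformly rather than splitting, whereas you dispatch the four-component case directly by applying Lemma~\ref{lem:completecomponent} to both sides---a small but legitimate shortcut.
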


\begin{proof}
The ``if" part is true by Lemmas~\ref{lem:K_iKm-iKn} and \ref{lem:K_iK_m-iK_jk_m-j}.
To show the ``only if'' part, suppose that $G$ is niche-realizable.
Then $G$ is the niche graph of a bipartite tournament, say $D$.
Let $(U,V)$ be the bipartition of $D$.
By the hypothesis, $G[U]$ or $G[V]$ has at least two components.
Without loss of generality, we may assume that $G[U]$ has at least two components.
Then, by Corollary~\ref{cor:components4}, $G[U]$ has exactly two components.
By Lemma~\ref{lem:completecomponent}, they are complete graphs and
\begin{equation}\label{eqn:Uxy}
U = [x]_D \dot \cup [y]_D
\end{equation}
for some vertices $x$ and $y$ in $U$ satisfying $x \, \RRR_D \, y$.
Since $D$ is a bipartite tournament, $V=N^+_D(x) \dot \cup N^-_D(x)$.
If one of $N^+_D(x)$ and $N^-_D(x)$ is empty, then $V$ equals $N^+_D(x)$ or $N^-_D(x)$ and so $G[V]$ is complete, which implies that the ``only if'' part is true.
Suppose that both $N^+_D(x)$ and $N^-_D(x)$ are nonempty.
We take
\begin{equation}\label{eqn:component1}
u \in N^+_D(x) \quad \mbox{and} \quad v \in N^-_D(x).
\end{equation}
By~\eqref{eqn:component1} and the definition of $\equiv_D$, $u \in N^+_D(z)$ and $v \in N^-_D(z)$, or equivalently $z \in N^-_D(u)$ and $z \in N^+_D(v)$ for any $z \in [x]_D$.
Thus $[x]_D \subset N^-_D(u)$ and $[x]_D \subset N^+_D(v)$.
Since $x \, \RRR_D \, y$, \eqref{eqn:component1} implies $u \in N^-_D(y)$ and $v \in N^+_D(y)$ and, by the same argument as above, we have $[y]_D \subset N^+_D(u)$ and $[y]_D \subset N^-_D(v)$.
Thus
\[
[x]_D \dot \cup [y]_D \subset N^-_D(u) \dot \cup N^+_D(u) = U \quad \text{and} \quad [x]_D \dot \cup [y]_D \subset N^+_D(v) \dot \cup N^-_D(v) = U.
\]
By \eqref{eqn:Uxy}, $[x]_D=N^-_D(u)=N^+_D(v)$ and $[y]_D=N^+_D(u)=N^-_D(v)$, which imply $u \, \RRR_D \, v$.
Hence, by Proposition~\ref{prop:notedge}, $u$ and $v$ are nonadjacent in $G$.
Since $u$ and $v$ are arbitrarily chosen, $N^+_D(x)$ and $N^-_D(x)$ are complete components of $G[V]$ and the the ``only if'' part follows.
\end{proof}

\begin{Lem} \label{lem:subgraph}
Let $G$ be the niche graph of a bipartite tournament $D$ with bipartition $(U,V)$ and $Z$ be a set of representatives of all the equivalence classes under $\equiv_D$ given in \eqref{eqn:equivalence}. Then each component of the subgraph  of $G$ induced by $Z$ is a complete multipartite graph with the partite sets of size at most two.
\end{Lem}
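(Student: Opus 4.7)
The plan is to decompose $G[Z]$ into its $U$-part and $V$-part via Proposition~\ref{prop:noedge} and then show that each of these pieces is the complement of a matching, from which the desired complete multipartite structure follows immediately. As a first step, I would invoke Proposition~\ref{prop:noedge} to conclude that $G[Z]$ has no edges between $Z \cap U$ and $Z \cap V$, and hence decomposes as the disjoint union of $G[Z \cap U]$ and $G[Z \cap V]$. Every component of $G[Z]$ therefore lies entirely inside one of these two subgraphs, and by symmetry it suffices to analyze $G[Z \cap U]$.

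The central step is to show that in $G[Z \cap U]$, every vertex has at most one non-neighbor. Suppose towards a contradiction that some $u \in Z \cap U$ has two distinct non-neighbors $v, w \in Z \cap U$. By Proposition~\ref{prop:notedge} this gives $u \, \RRR_D \, v$ and $u \, \RRR_D \, w$; using the symmetry of $\RRR_D$, the first can be rewritten as $v \, \RRR_D \, u$, and then Corollary~\ref{cor:sameclass} applied to the triple $(v, u, w)$ forces $[v]_D = [w]_D$, contradicting the fact that $v$ and $w$ are distinct representatives of the equivalence classes under $\equiv_D$. Consequently, the complement of $G[Z \cap U]$ has maximum degree at most one, that is, it is a matching together with some isolated vertices.

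To finish, I would observe that, given this matching structure, any component $C$ of $G[Z \cap U]$ is obtained by partitioning its vertex set into the matched pairs and the unmatched vertices lying in $C$; since the only non-edges of $G[Z \cap U]$ are those inside matched pairs, $C$ is a complete multipartite graph all of whose partite sets have size at most two. The case of $G[Z \cap V]$ is identical by symmetry. The main obstacle is really the central step above: translating the distinctness of the representatives in $Z$ into the bound on non-neighbors via Corollary~\ref{cor:sameclass}; once that is in place, the complete multipartite description with partite sets of size at most two is automatic.
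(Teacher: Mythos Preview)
Your proof is correct and follows essentially the same route as the paper: both arguments use Proposition~\ref{prop:notedge} and Corollary~\ref{cor:sameclass} to show that two non-edges of $G[Z\cap U]$ cannot share a vertex (since that would force two distinct representatives into the same $\equiv_D$-class), and then read off the complete multipartite structure. Your phrasing in terms of ``the complement of $G[Z\cap U]$ has maximum degree at most one'' is slightly more streamlined than the paper's iterative selection of non-adjacent pairs, but the underlying argument is the same.
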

\begin{proof}
Let $H$ be the subgraph  of $G$ induced by $Z$ and $X$ be a component of $H$.
By Proposition~\ref{prop:noedge},
$V(X) \subset U$ or $V(X) \subset V$.
If $X$ is complete, it is a complete multipartite graph with each partite set of size one.
Now suppose that $X$ is not complete.
Then there are nonadjacent vertices in $X$.
Let $x_1$ and $x_2$ be nonadjacent vertices in $X$.
Then, since $V(X) \subset U$ or $V(X) \subset V$,  $x_1 \, \RRR_D \, x_2$ by Proposition~\ref{prop:notedge}.
By symmetry, $x_2 \, \RRR_D \, x_1$.
Since $X$ is connected and $x_1$ and $x_2$ are not adjacent in $X$, there exist vertices in $X$ distinct from $x_1$ and $x_2$.
Now take one of them and denote it by $x$.
Suppose $x$ is not adjacent to $x_1$ in $X$.
Then $x \, \RRR_D \, x_1$ by Proposition~\ref{prop:notedge}. 
Since $x_1 \, \RRR_D \, x_2$ and $x \, \RRR_D \, x_1$, by Corollary~\ref{cor:sameclass}, $[x_2]_D = [x]_D$, which is impossible as $x$ and $x_2$ were chosen as representatives of distinct equivalence classes.
Thus $x$ is adjacent to $x_1$ in $X$.
By symmetry, $x$ is adjacent to $x_2$ in $X$.
Suppose that there is a set of two nonadjacent vertices in $X$ distinct from $\{x_1,x_2\}$.
Denote those two nonadjacent by $x_3$ and $x_4$.
Then $\{x_1,x_2\} \cap \{x_3, x_4\} = \emptyset$ since $x_1$ and $x_2$ are adjacent to each vertex in $X \setminus \{x_1, x_2\}$.
By the same argument as above, we may show that $x_3$ and $x_4$ are adjacent to every vertex in $X \setminus \{x_3, x_4\}$.
We may repeat this process of selecting a set of two nonadjacent vertices in $X$ until we cover every pair of nonadjacent vertices in $X$ and the lemma statement is true.
\end{proof}

For a graph $G$, a vertex $v$ of $G$, and a finite set $K$ disjoint from $V(G)$, we say that $v$ \emph{is replaced with a clique} formed by $K$ to obtain a new graph  with the vertex set $(V(G) \cup K) \setminus \{v\}$ and the edge set $E(G-v) \cup \{wx \mid w \neq x, \{w,x\}\subset K\} \cup \{uw \mid uv \in E(G), w\in K\}$.
See Figure~\ref{fig:replace} for an illustration.
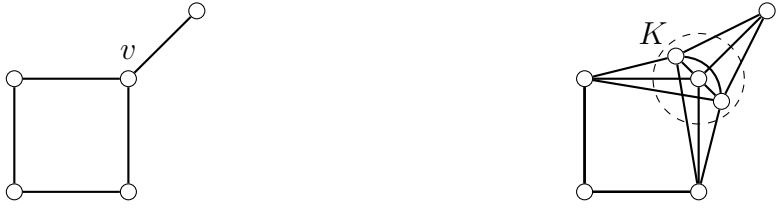
\begin{figure}
\begin{center}

\begin{tikzpicture}[x=1.5cm, y=1.5cm]

    \vertex (x1) at (0,0) [label=above:$$]{};
    \vertex (x2) at (0,1) [label=above:$$]{};
    \vertex (x3) at (1,0) [label=above:$$]{};
    \vertex (v) at (1,1) [label=above:$v$]{};
    \vertex (x5) at (1.6,1.6) [label=above:$$]{};

    \path
    (x1) edge [-,thick] (x2)
    (x1) edge [-,thick] (x3)
    (v) edge [-,thick] (x3)
    (v) edge [-,thick] (x2)
    (v) edge [-,thick] (x5)
	;

    \vertex (y1) at (5,0) [label=above:$$]{};
    \vertex (y2) at (5,1) [label=above:$$]{};
    \vertex (y3) at (6,0) [label=above:$$]{};
    \vertex (v1) at (6,1) [label=above:$$]{};
    \vertex (v2) at (6.2,0.8) [label=above:$$]{};
    \vertex (v3) at (5.8,1.2) [label=above:$$]{};
    \vertex (y5) at (6.6,1.6) [label=above:$$]{};

    \draw[dashed] (6,1) circle (0.4) [label=right:$K$]{};
    \draw (5.6,1.4) node{$K$};

    \path

    (v1) edge [-,thick] (v2)
    (v2) edge [-,bend right=40,thick] (v3)
    (v3) edge [-,thick] (v1)

    (y1) edge [-,thick] (y2)
    (y1) edge [-,thick] (y3)
    (v1) edge [-,thick] (y3)
    (v1) edge [-,thick] (y2)
    (v1) edge [-,thick] (y5)

    (y1) edge [-,thick] (y2)
    (y1) edge [-,thick] (y3)
    (v2) edge [-,thick] (y3)
    (v2) edge [-,thick] (y2)
    (v2) edge [-,thick] (y5)

    (y1) edge [-,thick] (y2)
    (y1) edge [-,thick] (y3)
    (v3) edge [-,thick] (y3)
    (v3) edge [-,thick] (y2)
    (v3) edge [-,thick] (y5)
	;

\end{tikzpicture}
\end{center}
\caption{The vertex $v$ of the graph on the left is replaced with a clique $K$ of size $3$ to yield the graph on the right.}
\label{fig:replace}
\end{figure}
We call a graph an \emph{expansion} of a graph $G$ if it is obtained by replacing each vertex in $G$ with a clique (possibly of size one).

Now we are ready to present our main result which completely characterizes niche-realizable graphs with exactly two components.

\begin{Thm}\label{twocomponents}
Let $G$ be a graph with exactly two components $G_1$ and $G_2$.
Then $G$ is niche-realizable if and only if
there are nonnegative integers $a_1$, $b_1$, $a_2$, and $b_2$ such that $2a_1+b_1 \le |V(G_1)|$, $2a_2+b_2 \le |V(G_2)|$, $1 \le a_1+b_1 \le 2^{a_2+b_2-1}$, $1 \le a_2+b_2 \le 2^{a_1+b_1-1}$, and
$G_i$ is an expansion of a complete $(a_i+b_i)$-partite graph with $a_i$ partite sets of size two and $b_i$ partite sets of size one for $i=1, 2$.
\end{Thm}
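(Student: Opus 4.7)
The plan is to prove both directions. The ``only if'' direction follows from the structural observations collected in Section~2 (especially Lemma~\ref{lem:subgraph} and Proposition~\ref{prop:notedge}) together with a reduced $0/1$ encoding of the arcs across the bipartition. The ``if'' direction is established by explicitly constructing a bipartite tournament; its non-trivial ingredient is the existence of a $0/1$ matrix of a prescribed shape with pairwise distinct and pairwise non-complementary rows and columns, which I expect to be the main obstacle.

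For the ``only if'' direction, let $(U,V)$ be the bipartition of a bipartite tournament $D$ realizing $G$. By Proposition~\ref{prop:noedge}, $G = G[U] \cup G[V]$. Since $G$ has exactly two components, either (i) $G_1 = G[U]$ and $G_2 = G[V]$ after relabeling, or (ii) one side is empty, in which case $D$ is arcless and $G = \overline{K_2}$ (handled by $(a_i,b_i)=(0,1)$ for $i=1,2$). In case (i), each $\equiv_D$-class of $G_i$ lies inside $G_i$ by $(\star)$, and $G_i$ is the expansion of the induced subgraph on a transversal $Z_i$ of its $\equiv_D$-classes; since $G_i$ is connected, so is $G[Z_i]$, and Lemma~\ref{lem:subgraph} then forces $G[Z_i]$ to be a single complete multipartite graph with partite sets of size at most two. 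Setting $a_i$ to the number of size-two partite sets (the $\RRR_D$-pairs among $\equiv_D$-classes in $V(G_i)$) and $b_i$ to the number of singleton partite sets gives the claimed form of $G_i$; each of the $2a_i+b_i$ classes is nonempty, so $|V(G_i)| \ge 2a_i+b_i$. For the inequality $a_1+b_1 \le 2^{a_2+b_2-1}$, observe that for each $\equiv_D$-class $C \subseteq U$ the out-neighborhood $N^+_D(u)$ ($u \in C$) is a union of $\equiv_D$-classes of $V$, and that inside each $\RRR_D$-pair $\{C',C''\}$ of $V$-classes, exactly one of $C', C''$ lies in $N^+_D(u)$, because $N^+_D(v')=N^-_D(v'')$ for $v'\in C', v''\in C''$ forces $u\to v'$ iff $u\not\to v''$. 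Hence each $U$-class $C$ is encoded by a vector $s(C)\in \{0,1\}^{a_2+b_2}$: distinct classes yield distinct encodings, $\RRR_D$-paired classes yield componentwise-complementary encodings, and classes in singleton partite sets yield encodings whose complement occurs for no other class. Therefore the $a_1+b_1$ partite sets of $G_1$'s skeleton occupy $a_1+b_1$ distinct complementary pairs out of the $2^{a_2+b_2-1}$ available. The symmetric argument yields $a_2+b_2 \le 2^{a_1+b_1-1}$.

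For the ``if'' direction, let $p=a_1+b_1$ and $q=a_2+b_2$, and I construct a $p\times q$ $0/1$ matrix $M$ whose rows are pairwise distinct and pairwise non-complementary in $\{0,1\}^q$ and whose columns are likewise in $\{0,1\}^p$. Take the first column of $M$ to be zero, let the next $\lceil \log_2 p\rceil$ columns encode the row indices in binary (these are non-constant, pairwise non-complementary, and separate all row pairs), and fill any remaining columns with non-constant vectors of $\{0,1\}^p$ chosen distinct modulo complementation from each other and from the binary columns; the inequality $q \le 2^{p-1}$ guarantees enough such classes exist. The vanishing first column forces every row to begin with $0$, making them pairwise non-complementary in $\{0,1\}^q$, while the separating binary columns make them pairwise distinct, and $p\le 2^{q-1}$ ensures there are enough admissible rows. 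Now partition $U$ into $a_1$ pair super-classes (each further split into two $\equiv_D$-classes whose clique sizes come from the expansion of $G_1$) and $b_1$ singleton super-classes so that $|U|=|V(G_1)|$, and partition $V$ analogously; orient the arcs between the $\ell$-th $U$-super-class and the $m$-th $V$-super-class according to $M[\ell,m]$, using in each pair--pair $2\times 2$ block the pattern $\bigl(\begin{smallmatrix} M[\ell,m] & 1-M[\ell,m] \\ 1-M[\ell,m] & M[\ell,m]\end{smallmatrix}\bigr)$ and the natural degenerations in the pair--singleton, singleton--pair, and singleton--singleton blocks. Using Propositions~\ref{prop:noedge} and~\ref{prop:notedge} together with $(\star)$, one checks directly that the row and column conditions on $M$ translate exactly to the distinctness of the $\equiv_D$-classes and to the desired $\RRR_D$-pair structure on both $U$ and $V$, so the niche graph of $D$ is $G_1 \cup G_2$.
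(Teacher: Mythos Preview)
Your argument is correct and follows essentially the same strategy as the paper. For the ``only if'' direction you encode each $\equiv_D$-class on one side by a bit vector in $\{0,1\}^{a_2+b_2}$ modulo complementation, which is exactly the paper's map $\phi$ into the set $\AAA=\{\{Y,X_2\setminus Y\}\mid Y\subset X_2\}$; the injectivity of this map gives the inequality $a_1+b_1\le 2^{a_2+b_2-1}$ in both versions. For the ``if'' direction both proofs reduce to exhibiting a $p\times q$ zero--one matrix (with $p=a_1+b_1$, $q=a_2+b_2$) whose rows are pairwise distinct and pairwise non-complementary and whose columns enjoy the same property, then blowing it up according to the prescribed expansion. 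The only genuine difference is in how this matrix is produced: you take a zero column together with $\lceil\log_2 p\rceil$ binary-encoding columns (which separate the rows and, since every row begins with~$0$, force them to be non-complementary) and then fill the remaining columns greedily, whereas the paper first assumes $p\ge q$, fixes a chain $Q_1\subset Q_2\subset\cdots\subset Q_q=Z_2$, and requires the injection $\Psi:Z_1\to\BBB$ to hit every pair $\{Q_i,Z_2\setminus Q_i\}$, which plays the same separating role for the columns. Both constructions are elementary and equally short; yours has the mild advantage of not needing the WLOG assumption, while the paper's chain device makes the verification of \eqref{eqn:psi} on the $V$-side slightly more transparent.
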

\begin{proof}
To show the ``only if'' part, suppose that $G$ is niche-realizable.
Then $G$ is the niche graph of a bipartite tournament, say $D$.
Let $(U,V)$ be the bipartition of $D$.
Let $H$ be the subgraph of $G$ induced by a set of representatives of all the equivalence classes under $\equiv_D$. Then, by Lemma~\ref{lem:subgraph}, each component of $H$ is a complete multipartite graph with the partite sets of size at most two.
Then, by definition, $H$ has exactly two components, say $H_1$ and $H_2$.
Moreover, $G_i$ is an expansion of $H_i$ for $i=1,2$.
We denote by $a_i$ the number of partite sets of size two and by $b_i$ the number of partite sets of size one in $H_i$ for $i=1,2$.
Obviously $1 \le a_i+b_i$ and $2a_i+b_i = |V(H_i)| \le |V(G_i)|$ for each $i=1,2$.

Now we take a vertex from each partite set of $H_i$ and denote the set of taken vertices by $X_i$ for each $i=1,2$.
We note that
\begin{itemize}
\item[($\flat$)] any pair of vertices in $X_i$ is adjacent in $H_i$ for each $i=1$, $2$.
\end{itemize}
Let $\AAA = \{ \{Y, X_2 \setminus Y \} \mid Y \subset X_2 \}$.
Then $|\AAA| = 2^{|X_2|-1} = 2^{a_2+b_2-1}$.
Now we define a map $\phi: X_1 \to \AAA$ by $\phi(u) := \{N^+_D(u) \cap X_2, X_2 \setminus N^+_D(u) \}$.
It is obvious that $\phi$ is well-defined.
To show that $\phi$ is one-to-one, suppose that $\phi(u_1)=\phi(u_2)$ for some $u_1$ and $u_2$ in $X_1$. Then $\{N^+_D(u_1) \cap X_2, X_2 \setminus N^+_D(u_1) \}=\{N^+_D(u_2) \cap X_2, X_2 \setminus N^+_D(u_2) \}$, that is, $N^+_D(u_1) \cap X_2=N^+_D(u_2) \cap X_2$ or $N^+_D(u_1) \cap X_2= X_2 \setminus N^+_D(u_2)$.
Before we consider these two cases, we observe the following.

Take a vertex $v \in N^+_D(u_1)$.
Then there exists a vertex $v^*$ in $H_2$ such that $v \equiv_D \! v^*$.
Therefore $v^* \in N^+_D(u_1)$.
Suppose $v^* \not\in X_2$.
Then $v^*$ is contained in a partite set of size two and the other vertex, say $w$, in the partite set belongs to $X_2$.
Then, by Proposition~\ref{prop:notedge}, $v^* \, \RRR_D \, w$ i.e., $N^-_D(v^*)=N^+_D(w)$ and $N^+_D(v^*)=N^-_D(w)$.
Since $v^* \in N^+_D(u_1)$ and $v^* \, \RRR_D \, w$, we have $w \in N^-_D(u_1)$.
Therefore the following are true:
\begin{itemize}
  \item[($\dag$)] If $v^* \notin X_2$, then $N^-_D(v^*)=N^+_D(w)$ and $N^+_D(v^*)=N^-_D(w)$ and $w \in N_D^-(u_1)$ where $w$ is the other vertex in the partite set containing $v^*$.
\end{itemize}

Suppose that $N^+_D(u_1) \cap X_2= X_2 \setminus N^+_D(u_2)$, which is equivalent to $N^+_D(u_1) \cap X_2= N^-_D(u_2) \cap X_2$ or $N^-_D(u_1) \cap X_2= N^+_D(u_2) \cap X_2$.
If $v^* \in X_2$, then $v^* \in N^+_D(u_1) \cap X_2= N^-_D(u_2) \cap X_2$ and so  $v^* \in N^-_D(u_2)$.
Suppose $v^* \notin X_2$.
Then $w \in N^-_D(u_1) \cap X_2$ by ($\dag$).
Since $N^-_D(u_1) \cap X_2 = N^+_D(u_2) \cap X_2$, $w \in N^+_D(u_2)$.
By ($\dag$), $v^* \in N^-_D(u_2)$.
Therefore we have shown that $v^* \in N^-_D(u_2)$  no matter whether $v^* \in X_2$ or $v^* \not\in X_2$.
Since $v \equiv_D \! v^*$, $v \in N^-_D(u_2)$ and so $N^+_D(u_1) \subset N^-_D(u_2)$.
By a symmetric argument, we may show that $N^+_D(u_1) \supset N^-_D(u_2)$, so $N^+_D(u_1) = N^-_D(u_2)$ i.e., $u_1 \, \RRR_D \, u_2$.
Therefore $u_1$ and $u_2$ are not adjacent in $H_1$ by Proposition~\ref{prop:notedge}, which contradicts ($\flat$).
Thus $N^+_D(u_1) \cap X_2=N^+_D(u_2) \cap X_2$, which is equivalent to $N^-_D(u_1) \cap X_2=N^-_D(u_2) \cap X_2$.
If $v^* \in X_2$, then $v^* \in N^+_D(u_1) \cap X_2= N^+_D(u_2)\cap X_2$ and so $v^*\in N^+_D(u_2)$.
Suppose $v^* \not\in X_2$.
Then $w \in N^-_D(u_1) \cap X_2$
by ($\dag$).
Since $N^-_D(u_1) \cap X_2=N^-_D(u_2) \cap X_2$, $w \in N^-_D(u_2)$.
By ($\dag$), $v^* \in N^+_D(u_2)$.
Therefore we have shown that $v^* \in N^+_D(u_2)$ no matter whether $v^* \in X_2$ or $v^* \not\in X_2$.
Since $v \equiv_D \! v^*$, $v \in N^+_D(u_2)$ and so $N^+_D(u_1) \subset N^+_D(u_2)$.
By a symmetric argument, we may show that $N^+_D(u_1) \supset N^+_D(u_2)$, so $N^+_D(u_1) = N^+_D(u_2)$.
Since $u_1$ and $u_2$ are representatives of equivalence classes, $u_1=u_2$.
Thus $\phi$ is one-to-one and so
\[a_1+b_1=|X_1| \le |\AAA| = 2^{a_2+b_2-1}.\]
By symmetry, \[a_2+b_2=|X_2| \le  2^{a_1+b_1-1}.\]
Hence the ``only if'' part is true.

To show the ``if'' part, suppose that there are nonnegative integers $a_1$, $b_1$, $a_2$, and $b_2$ such that $2a_1+b_1 \le |V(G_1)|$, $2a_2+b_2 \le |V(G_2)|$, $1 \le a_1+b_1 \le 2^{a_2+b_2-1}$, $1 \le a_2+b_2 \le 2^{a_1+b_1-1}$, and
$G_i$ is an expansion of a complete $(a_i+b_i)$-partite graph with $a_i$ partite sets of size two and $b_i$ partite sets of size one for each $i=1,2$. We first construct a bipartite tournament $D$ whose niche graph is the disjoint union of a complete $(a_1+b_1)$-partite graph $L_1$ with $a_1$ partite sets of size two and $b_1$ partite sets of size one and a complete $(a_2+b_2)$-partite graph $L_2$ with $a_2$ partite sets of size two and $b_2$ partite sets of size one.

Take a vertex from each partite set of $L_i$,
let $Z_i$ be the set of vertices taken for each $i=1, 2$.
Then $|Z_i|=a_i+b_i$ for each $i=1, 2$.
Without loss of generality, we may assume that $a_1+b_1 \ge a_2+b_2$.
In addition, let $\BBB = \{ \{Y, Z_2 \setminus Y \} \mid Y \subset Z_2 \}$.
Then $|\BBB|=2^{a_2+b_2-1}$.
We note that $|Z_1| \le |\BBB|$ since $a_1 + b_1 \le 2^{a_2+b_2-1}$.
Now let $Q_i$ be $i$-element subset of $Z_2$ for $i=1, \ldots, a_2+b_2$ such that
\[
Q_1 \subset Q_2 \subset Q_3 \subset \cdots \subset Q_{a_2+b_2}=Z_2.
\]
Since $|Z_1| \le |\BBB|$, we may define a one-to-one function $\Psi : Z_1 \to \BBB$ so that $\{\{Q_i,Z_2 \setminus Q_i\} \mid i=1, 2, \ldots, a_2+b_2\} \subset \Psi(Z_1)$.
We construct a bipartite tournament $D$ with bipartition $(V(L_1),V(L_2))$ as follows.
For each vertex $u$ in $Z_1$, we add the arc set
either $[u,Y_u] \cup [Z_2 \setminus Y_u, u]$ or $[Y_u, u] \cup [u, Z_2 \setminus Y_u]$ where $\psi(u)=\{Y_u, Z_2 \setminus Y_u\}$.
If $V(L_1) \setminus Z_1 \neq \emptyset$, then we proceed further to take the following step.
Take a vertex $z$ in $V(L_1) \setminus Z_1$.
Then $z$ belongs to a partite set of size two and the other vertex $z'$ in the partite set is joined to each vertex in $Z_2$ by an arc which has been already added.
Now, for each vertex $z$ in $V(L_1)\setminus Z_1$, we add an arc $(z,w)$ (resp.\ $(w,z)$) if the arc $(w,z')$ (resp.\ $(z',w)$) for $w \in Z_2$ has been added.
If $V(L_2) \setminus Z_2 \neq \emptyset$, then we take a procedure of adding arcs similarly for the case $V(L_1)\setminus Z_1 \neq \emptyset$ to have the bipartite tournament $D$.

For $i=1,2$ and vertices $u$ and $v$ in $Z_i$ belonging to different partite sets, the following is true:
\begin{equation}\label{eqn:psi}
\{N_D^+(u), N_D^-(u)\} \neq \{N_D^+(v), N_D^-(v)\}.
\end{equation}
If $i=1$, \eqref{eqn:psi} is immediately true by the definition of $\Psi$ and the way in which $D$ is constructed.
Suppose $i=2$.
By the definition of $\Psi$, $\{Z_2, \emptyset\} \in \Psi(Z_1)$, so $\emptyset \neq \Psi^{-1}(\{Z_2, \emptyset\}) \subset N_D^+(u) \cap N_D^+(v)$ or $\Psi^{-1}(\{Z_2, \emptyset\}) \subset N_D^-(u) \cap N_D^-(v)$ by the way in which $D$ is constructed.
Thus $N_D^+(u) \neq N_D^-(v)$ and $N_D^-(u) \neq N_D^+(v)$.
By the definition of $Q_j$, $|Q_{j^*} \cap \{u, v\}|=1$ for some $j^* \in \{1, \ldots, a_2+b_2\}$.
By the definition of $\Psi$, $\Psi^{-1}(\{Q_{j^*}, Z_2 \setminus Q_{j^*}\}) \neq \emptyset$.
We may assume that $u \in Q_{j^*}$.
Then $v \in Z_2 \setminus Q_{j^*}$.
Thus, by the definition of $\Psi$ and the way in which $D$ is constructed, exactly one of the following is true:
\[\Psi^{-1}(\{Q_{j^*}, Z_2 \setminus Q_{j^*}\}) \subset N_D^+(u) \text{ and }  \Psi^{-1}(\{Q_{j^*}, Z_2 \setminus Q_{j^*}\}) \not\subset N_D^+(v);\]
\[ \Psi^{-1}(\{Q_{j^*}, Z_2 \setminus Q_{j^*}\}) \subset N_D^+(v) \text{ and }  \Psi^{-1}(\{Q_{j^*}, Z_2 \setminus Q_{j^*}\}) \not\subset N_D^+(u),\]
which implies $N_D^+(u) \neq N_D^+(v)$ and $N_D^-(u) \neq N_D^-(v)$.
Therefore \eqref{eqn:psi} is true for $i=2$.

We will show that the disjoint union $L_1 \cup L_2$ of $L_1$ and $L_2$ is the niche graph of $D$. Let $G'$ be the niche graph of $D$.
By the way of construction, for $z$ and $z'$ in the same partite set of size $2$ of $L_1$ (resp.\ $L_2$), $z \, \RRR_D \, z'$ so the two vertices in the same partite set of $L_1$ (resp.\ $L_2$) form an independent set by Proposition~\ref{prop:notedge} in $G'$.
Take two vertices $u_1$ and $u_2$ from different partite sets of $L_i$ for $i=1,2$.
By \eqref{eqn:psi}, $\{N_D^+(u_1'), N_D^-(u_1')\} \neq \{N_D^+(u_2'), N_D^-(u_2')\}$  where $u_j'$ is the vertex in the partite set containing $u_j$ that belongs to $Z_i$ for $j=1,2$.
By construction of $D$, $\{N_D^+(u_i), N_D^-(u_i)\}=\{N_D^+(u_i'), N_D^-(u_i')\}$ for $i=1,2$, so $\{N_D^+(u_1), N_D^-(u_1)\} \neq \{N_D^+(u_2), N_D^-(u_2)\}$.
Thus $u_1$ and $u_2$ are adjacent in $G'$.
Consequently we have shown that $G'=L_1 \cup L_2$.

By the definition of expansion, $G$ is obtained by replacing each vertex $v$ of $L_1 \cup L_2$ with a clique $K_v$. We define a digraph $D^*$ as follows:
\[V(D^*)=\bigcup_{v \in V(D)}K_v;\]
\[A(D^*)=\bigcup_{(u,v) \in A(D)}[K_u,K_v].\]
It is obvious that $D^*$ is a bipartite tournament. By the definition of expansion, it is easy to see that $G$ is the niche graph of $D^*$.
\end{proof}

From Theorem~\ref{twocomponents}, we may derive basic niche-realizable pairs.
To do so, we introduce the following notations.
For a complete multipartite graph $H$ with partite sets of size at most two, we denote by $a(H)$ and $b(H)$ the number of partite sets of size two and the number of partite sets of size one, respectively.
For an expansion $G$ of a complete multipartite graph with partite sets of size at most two, we let
\begin{align*}
X(G) = \{(a(H),b(H)) \mid &\text{ $G$ is an expansion of a complete multipartite graph $H$} \\
&\text{ \ with partite sets of size at most $2$} \}.
\end{align*}

\begin{Cor}\label{prop:KmKn}
The pair $(K_m, K_n)$ is niche-realizable for positive integers $m$ and $n$.
\end{Cor}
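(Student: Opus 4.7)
The plan is to derive this corollary directly from Theorem~\ref{twocomponents} by exhibiting an explicit choice of parameters $a_1, b_1, a_2, b_2$. Since $K_m$ is a complete graph, I would want the underlying multipartite graph whose expansion yields $K_m$ to have every partite set of size one (any partite set of size two would leave two nonadjacent vertices in any expansion). This forces $a_1 = 0$, and similarly $a_2 = 0$. Among the remaining choices for $b_1, b_2$, the simplest is $b_1 = b_2 = 1$: then each of $K_m, K_n$ is obtained by replacing the single vertex of a one-vertex graph (the complete $1$-partite graph with a single partite set of size one) by a clique of the appropriate size, which matches the definition of expansion given immediately before the theorem.

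Next I would verify the four numerical conditions of Theorem~\ref{twocomponents} with $a_1 = a_2 = 0$ and $b_1 = b_2 = 1$. The size conditions $2a_i + b_i = 1 \le |V(K_m)|$ and $2a_j + b_j = 1 \le |V(K_n)|$ hold because $m, n$ are positive integers. The exponential conditions become $1 \le a_1 + b_1 = 1 \le 2^{a_2 + b_2 - 1} = 2^{0} = 1$ and, symmetrically, $1 \le a_2 + b_2 = 1 \le 2^{a_1 + b_1 - 1} = 1$, both of which are satisfied with equality.

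Finally, having checked every hypothesis of Theorem~\ref{twocomponents}, I would invoke its ``if'' direction to conclude that the pair $(K_m, K_n)$ is niche-realizable.

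The main obstacle here is essentially trivial: Theorem~\ref{twocomponents} does all the heavy lifting, and the only point requiring care is matching the definition of expansion to the case where the underlying complete multipartite graph is a single vertex. Once that is observed, the rest is a check of four inequalities, each of which becomes an equality $1 = 1$.
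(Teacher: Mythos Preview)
Your proposal is correct and follows essentially the same approach as the paper's own proof: both observe that a complete graph is an expansion of $K_1$, take $a_1=a_2=0$ and $b_1=b_2=1$, verify the four inequalities of Theorem~\ref{twocomponents}, and invoke its ``if'' direction.
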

\begin{proof}
Complete graphs are expansions of $K_1$.
Therefore $(0,1) \in X(K_l)$ for any $l \ge 1$.
For $(G_1,G_2)=(K_m,K_n)$ and $a_1=a_2=0$ and $b_1=b_2=1$, the inequalities $2a_1+b_1 \le |V(G_1)|$, $2a_2+b_2 \le |V(G_2)|$, $1 \le a_1+b_1 \le 2^{a_2+b_2-1}$, and $1 \le a_2+b_2 \le 2^{a_1+b_1-1}$ are satisfied.
Thus, by Theorem~\ref{twocomponents}, the statement is true.
\end{proof}

\begin{Cor}\label{cor:path}
Let $m$ and $n$ be positive integers.
Then the pair $(P_m, P_n)$ is niche-realizable if and only if $(m,n) \in \{(1,1), (1,2), (2,1), (2,2), (2,3), (3,2), (3,3)\}$.
\end{Cor}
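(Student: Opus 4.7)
The plan is to combine Corollary~\ref{diameter} with Theorem~\ref{twocomponents}. Since $P_k$ has diameter $k-1$ and Corollary~\ref{diameter} forces each component of a niche-realizable graph to have diameter at most two, I first reduce to the case $m,n\le 3$; in particular, none of the pairs with $\max\{m,n\}\ge 4$ can be niche-realizable, which eliminates all candidates outside $\{1,2,3\}\times\{1,2,3\}$ in one stroke.

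For the remaining nine pairs with $m,n\in\{1,2,3\}$, the strategy is to compute, for each $k\in\{1,2,3\}$, the set $X(P_k)$ of pairs $(a,b)$ of nonnegative integers such that $P_k$ is an expansion of a complete $(a+b)$-partite graph with $a$ parts of size two and $b$ parts of size one. I expect to obtain the single pair $(0,1)$ for $P_1$; the two pairs $(0,1)$ and $(0,2)$ for $P_2$, since $K_2$ is both an expansion of $K_1$ and already a complete bipartite graph $K_{1,1}$; and only $(1,1)$ for $P_3$, since $P_3=K_{1,2}$ is itself a complete multipartite graph with one part of size two and one of size one, while any expansion of $K_1$ is a complete graph, any expansion of $K_2$ is again complete, and any expansion of $\overline{K_2}$ is disconnected.

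Plugging each choice into the inequalities of Theorem~\ref{twocomponents}, which in the shorthand $s_i=a_i+b_i$ become $s_1\le 2^{s_2-1}$ and $s_2\le 2^{s_1-1}$, I find that the only bad case among the nine is $(1,3)$ (and symmetrically $(3,1)$): there $s_1=1$ forces $s_2\le 1$, contradicting $s_2=2$. For each of the seven listed pairs I would exhibit admissible values: $s_1=s_2=1$ whenever both $m,n\in\{1,2\}$; the choice $(a_1,b_1)=(0,2)\in X(P_2)$, $(a_2,b_2)=(1,1)\in X(P_3)$ for $(2,3)$; and $s_1=s_2=2$ for $(3,3)$.

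The main obstacle is the enumeration step: rigorously verifying that $X(P_3)=\{(1,1)\}$ by ruling out every complete multipartite graph with parts of size at most two of which $P_3$ could conceivably be an expansion. Once this is settled, the remaining numerical verification is a brief case check against the two simple inequalities coming from Theorem~\ref{twocomponents}.
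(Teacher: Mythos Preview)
Your proposal is correct and essentially identical to the paper's own proof: the paper also computes $X(P_1)=\{(0,1)\}$, $X(P_2)=\{(0,1),(0,2)\}$, $X(P_3)=\{(1,1)\}$ and then checks the inequalities of Theorem~\ref{twocomponents} to single out $(1,3)$ and $(3,1)$ as the only failures among the nine candidate pairs. The only cosmetic difference is that the paper invokes Proposition~\ref{prop:induced P3} (no induced $P_4$) rather than Corollary~\ref{diameter} to rule out $m\ge 4$ or $n\ge 4$, but since the latter is stated as an immediate consequence of the former, the two reductions are interchangeable.
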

\begin{proof}
By Proposition~\ref{prop:induced P3}, $(P_m, P_n)$ is not niche-realizable if $m \ge 4$ or $n \ge 4$.
It is easy to check that the path graphs $P_1$, $P_2$, and $P_3$ are expansions of complete multipartite graphs with
$X(P_1) = \{(0,1)\}$, $X(P_2) = \{(0,1), (0,2)\}$, and $X(P_3) = \{(1,1)\}$.
For $(G_1,G_2)=(P_m,P_n)$, it is easy to check that there exist $(a_1,b_1) \in X(P_m)$, $(a_2,b_2) \in X(P_n)$ satisfying the inequalities
$2a_1+b_1 \le |V(G_1)|$, $2a_2+b_2 \le |V(G_2)|$, $1 \le a_1+b_1 \le 2^{a_2+b_2-1}$, and $1 \le a_2+b_2 \le 2^{a_1+b_1-1}$ if and only if  $(m,n) \in \{(1,1), (1,2), (2,1), (2,2), (2,3), (3,2), (3,3)\}$.
Thus, by Theorem~\ref{twocomponents}, the statement is true.
\end{proof}

\begin{Cor}\label{cor:cycle}
Let $m$ and $n$ be integers with $m,n \ge 3$.
Then the pair $(C_m, C_n)$ is niche-realizable if and only if $(m,n) \in \{(3,3), (3,4), (4,3), (4,4)\}$.
\end{Cor}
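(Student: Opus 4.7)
The plan is to mirror the approach of Corollary~\ref{cor:path}: first rule out large $m$ and $n$ using Proposition~\ref{prop:induced P3}, then apply Theorem~\ref{twocomponents} after computing $X(C_3)$ and $X(C_4)$ explicitly.

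For the forward direction, whenever $k \ge 5$ the cycle $C_k$ contains four consecutive vertices that induce a $P_4$; hence if $m \ge 5$ or $n \ge 5$, then $C_m \cup C_n$ contains an induced path of length three, which Proposition~\ref{prop:induced P3} forbids. Together with the hypothesis $m, n \ge 3$, this restricts the analysis to $m, n \in \{3, 4\}$.

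Next I would compute $X(C_3)$ and $X(C_4)$. Since $C_3 = K_3$ is complete, it is an expansion of $K_1$, $K_2$, or $K_3$ in the obvious ways, all of which are complete multipartite graphs whose partite sets have size one; hence $\{(0,1),(0,2),(0,3)\} \subseteq X(C_3)$. Since $C_4 = K_{2,2}$, we have $(2,0) \in X(C_4)$; moreover, $C_4$ is triangle-free (so any clique used as an expansion block has size at most two) and no two adjacent vertices of $C_4$ share a closed neighborhood (so no nontrivial collapse is possible), whence $X(C_4) = \{(2,0)\}$.

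With these sets in hand, Theorem~\ref{twocomponents} reduces the remaining task to a short arithmetic check. For $(3,3)$ I would take $(a_1,b_1)=(a_2,b_2)=(0,1)$; for $(3,4)$ and its mirror $(4,3)$, I would pair $(0,2) \in X(C_3)$ with $(2,0) \in X(C_4)$; and for $(4,4)$ I would take $(a_1,b_1)=(a_2,b_2)=(2,0)$. In each case the four inequalities $2a_i+b_i \le |V(C_{m_i})|$ and $1 \le a_i+b_i \le 2^{a_{3-i}+b_{3-i}-1}$ are verified by inspection. The main obstacle is pinning down that $X(C_4) = \{(2,0)\}$; everything else follows mechanically from Theorem~\ref{twocomponents} and Proposition~\ref{prop:induced P3}.
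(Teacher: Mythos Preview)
Your proposal is correct and follows essentially the same approach as the paper's own proof: rule out $m\ge 5$ or $n\ge 5$ via Proposition~\ref{prop:induced P3}, compute $X(C_3)$ and $X(C_4)$, and then invoke Theorem~\ref{twocomponents} with exactly the same parameter choices you list. The only cosmetic difference is that the paper asserts $X(C_3)=\{(0,1),(0,2),(0,3)\}$ outright while you only claim the inclusion $\supseteq$, but since the ``only if'' direction is already settled by Proposition~\ref{prop:induced P3}, your weaker statement suffices.
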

\begin{proof}
By Proposition~\ref{prop:induced P3}, $(C_m, C_n)$ is not niche-realizable if $m \ge 5$ or $n \ge 5$.
It is easy to check that the cycles $C_3$ and $C_4$ are expansions of complete multipartite graphs and that $X(C_3) = \{(0,1), (0,2), (0,3)\}$ and $X(C_4) = \{(2,0)\}$.
For $(G_1,G_2)=(C_m,C_n)$, it is easy to check that there exist $(a_1,b_1) \in X(C_m)$, $(a_2,b_2) \in X(C_n)$ satisfying the inequalities
$2a_1+b_1 \le |V(G_1)|$, $2a_2+b_2 \le |V(G_2)|$, $1 \le a_1+b_1 \le 2^{a_2+b_2-1}$, and $1 \le a_2+b_2 \le 2^{a_1+b_1-1}$ if and only if  $(m,n) \in \{(3,3), (3,4), (4,3), (4,4)\}$.
Thus, by Theorem~\ref{twocomponents}, the statement is true.
\end{proof}

We call the graph resulting from identifying the vertices in each critical clique in $G$  the \emph{condensation} of $G$ and denote it by $G^*$. We note that $G$ is an expansion of $G^*$.

We derive a necessary condition for a graph being niche-realizable in terms of the condensation of a graph, which might be more intuitive.

\begin{Lem}\label{lem:condensation11}
Let $G$ be a graph and $G'$ be a graph obtained by identifying some homogeneous vertices of $G$.
Then $(G')^* = G^*$.
\end{Lem}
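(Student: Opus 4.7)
The plan is to reduce the statement to a single elementary merger and then track closed neighborhoods. First, I would recall that $u\equiv_G v$ means $N_G[u]=N_G[v]$, which in particular forces $u\sim v$ (because $u\in N_G[u]=N_G[v]$ implies $v$ is a neighbor of $u$), so mutually homogeneous vertices form cliques; by the remark preceding the lemma, the equivalence classes under $\equiv_G$ are precisely the critical cliques of $G$. In particular, any collection of mutually homogeneous vertices of $G$ sits inside a single critical clique. It therefore suffices, by an obvious induction on the number of elementary identifications used to pass from $G$ to $G'$, to prove the claim in the case where $G'$ arises from $G$ by merging exactly two homogeneous vertices $u,v$ into a single new vertex $w$.

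Next I would analyze how closed neighborhoods change under this merger. Because $N_G[u]=N_G[v]$, each $x\in V(G)\setminus\{u,v\}$ is adjacent in $G$ to both of $u,v$ or to neither. Writing $V(G')=\bigl(V(G)\setminus\{u,v\}\bigr)\cup\{w\}$, one therefore has
\[
N_{G'}[w]=\bigl(N_G[u]\setminus\{u,v\}\bigr)\cup\{w\},\qquad
N_{G'}[x]=\bigl(N_G[x]\setminus\{u,v\}\bigr)\cup\bigl(\{w\}\cap N_G[x]\bigr)
\]
for $x\in V(G)\setminus\{u,v\}$, where $\{w\}\cap N_G[x]$ is read as $\{w\}$ when $x$ was adjacent in $G$ to $u$ (equivalently to $v$) and as $\emptyset$ otherwise. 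From these two formulas a direct verification shows that for all $x,y\in V(G)\setminus\{u,v\}$ one has $x\equiv_{G'} y\iff x\equiv_G y$, and that $x\equiv_{G'} w\iff x\equiv_G u$. Consequently the critical cliques (equivalence classes) of $G'$ are obtained from those of $G$ by replacing $\{u,v\}$ by $\{w\}$ inside their common critical clique $C$ and leaving all other critical cliques untouched.

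Finally, the condensation operation collapses each critical clique to a single vertex and places an edge between two such condensed vertices precisely when some (equivalently every) pair of representatives across the two classes was adjacent in the original graph. Since the critical cliques of $G'$ are in natural bijection with those of $G$ under the map that merges $\{u,v\}\mapsto\{w\}$ within one class, and since the inter-class adjacencies are manifestly unaffected by such an internal merger, the two condensed graphs $(G')^{*}$ and $G^{*}$ coincide. The only subtlety to keep in mind is the degenerate subcase in which the critical clique of $G$ containing $u,v$ is exactly $\{u,v\}$ itself, so that in $G'$ the new vertex $w$ becomes its own critical singleton; but the bijection above still matches this $\{w\}$-class with the $\{u,v\}$-class of $G$, and the argument goes through verbatim. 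The main obstacle is thus only careful bookkeeping of the closed-neighborhood formulas and of this degenerate subcase; the rest is routine.
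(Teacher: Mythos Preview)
Your proof is correct and follows essentially the same approach as the paper: the key observation in both is that merging homogeneous vertices preserves the homogeneity relation (i.e., $x$ is homogeneous with the merged vertex in $G'$ iff $x$ was homogeneous with $u$ in $G$, and homogeneity among the remaining vertices is unchanged), from which equality of condensations follows. The only cosmetic difference is that you reduce by induction to a single pairwise merger and spell out the closed-neighborhood bookkeeping explicitly, whereas the paper states the preservation of homogeneity for a general identification in one line and declares the rest immediate.
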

\begin{proof}
It is obvious by definition that, if $u^*$ is the vertex in $G'$ which is obtained by identifying $u$ and some vertices homogeneous with $u$ other than $v$, then $u$ and $v$ are homogeneous in $G$ if and only if $u^*$ and $v$ are homogeneous in $G'$.
The lemma immediately follows from this observation.
\end{proof}

\begin{Thm}\label{thm:componentstucture:new}
Each component of the condensation of a
niche-realizable graph is a complete multipartite graph with the partite sets of size at most two among which there exists at most one partite set of size one.
\end{Thm}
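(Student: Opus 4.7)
The plan is to split into cases by the number of components of $G$, which by Corollaries~\ref{cor:at least two components} and~\ref{cor:components4} lies in $\{2,3,4\}$. Since every critical clique is a clique and therefore sits inside a single component of $G$, condensation respects components, and it suffices to describe the condensation of each component separately.

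If $G$ has three or four components, Theorem~\ref{thm:component:new} forces every component of $G$ to be complete, and the condensation of a complete graph is $K_1$. A single vertex is a complete multipartite graph with one partite set of size one, so the claim holds trivially in this case.

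The substantive case is when $G$ has exactly two components $G_1$ and $G_2$. Theorem~\ref{twocomponents} then gives that each $G_i$ is an expansion of a complete multipartite graph $H_i$ with $a_i$ partite sets of size two and $b_i$ partite sets of size one. For every $v \in V(H_i)$, the corresponding clique $K_v$ of the expansion consists of mutually homogeneous vertices of $G_i$; collapsing these cliques one at a time and applying Lemma~\ref{lem:condensation11} at each step yields $G_i^* = H_i^*$, so it suffices to describe $H_i^*$. A direct inspection of closed neighborhoods in $H_i$ shows that two of its vertices are homogeneous if and only if they lie in distinct partite sets each of size one: vertices inside the same partite set are non-adjacent and cannot be homogeneous; a vertex in a size-one partite set has closed neighborhood $V(H_i)$ whereas a vertex in a size-two partite set does not; and two vertices in distinct size-two partite sets have closed neighborhoods differing by one element on each side. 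Hence the critical cliques of $H_i$ are the $2a_i$ singletons from the size-two partite sets together with, when $b_i \ge 1$, one merged clique consisting of all vertices in size-one partite sets. Consequently $H_i^*$ is a complete multipartite graph whose partite sets are the $a_i$ original size-two partite sets plus at most one size-one partite set (the merged one, present exactly when $b_i \ge 1$), which is precisely what the theorem asserts.

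The main obstacle I expect is establishing the reduction $G_i^* = H_i^*$ cleanly: one has to verify that after identifying $K_{v_1}$ to a single vertex, the vertices of $K_{v_2}$ remain mutually homogeneous in the resulting graph, so that Lemma~\ref{lem:condensation11} can be iterated over all $v \in V(H_i)$. This is short but should be stated carefully, since identifying a homogeneous set replaces that set by a single vertex uniformly in every other vertex's neighborhood, hence preserves the homogeneity relation among the remaining vertices. Once this is in place, the critical-clique calculation inside $H_i$ is a brief case analysis and the rest of the argument is immediate.
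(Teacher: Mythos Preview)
Your argument is correct and follows essentially the same route as the paper: both proofs use Lemma~\ref{lem:condensation11} to reduce $G^*$ to the condensation of a complete multipartite graph with partite sets of size at most two, and then observe that in such a graph the only homogeneous pairs are those coming from distinct size-one partite sets, so these collapse into a single size-one class. The only packaging difference is that the paper obtains the intermediate complete multipartite graph uniformly from the digraph via $\equiv_D$ and Lemma~\ref{lem:subgraph} (so no case split on the number of components is needed), whereas you invoke Theorems~\ref{thm:component:new} and~\ref{twocomponents} and handle the three/four-component case separately; since Theorem~\ref{twocomponents} itself rests on Lemma~\ref{lem:subgraph}, this is a cosmetic rather than substantive difference.
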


\begin{proof}
Let $G$ be a niche-realizable graph.
Then $G$ is the niche graph of a bipartite tournament $D$.
Let $(U,V)$ be the bipartition of $D$, and $Z$ be a set of representatives of all the equivalence classes under $\equiv_D$ given in \eqref{eqn:equivalence}. Let $G/\!\equiv_D$ be the graph obtained from $G$ by identifying the vertices in each of equivalence classes under $\equiv_D$.
Then $G/\!\equiv_D$ is isomorphic to the subgraph $H$ of $G$ induced by $Z$. 
Thus, by Lemma~\ref{lem:subgraph}, each component of $G/\!\equiv_D$ is a complete multipartite graph with the partite sets of size at most two.
By Lemma~\ref{lem:condensation11} and ($\star$), $\left(G/\!\equiv_D\right)^*=G^*$.
It is easy to check that the condensation of a complete multipartite graph with the partite sets of size at most two is a complete multipartite graph with the partite sets of size at most two.
Therefore each partite set of $G^*$ has at most two vertices.
Furthermore, since the vertices from a partite set of size one are homogeneous, there exists at most one partite set of size one in $G^*$.
\end{proof}

\section{Noteworthy properties of niche-realizable graphs}
In this section, we apply Theorem~\ref{twocomponents} to find meaningful properties of niche-realizable graphs.

\begin{Prop}\label{prop:hole}
Let $G$ be a niche-realizable graph. Then neither $G$ nor $\overline{G}$ has a hole of length greater than or equal to five.
\end{Prop}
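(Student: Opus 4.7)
The plan is to reduce both halves of the statement to Proposition~\ref{prop:induced P3}, which forbids any niche-realizable graph from containing an induced $P_4$.

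For the hole in $G$, suppose $G$ contains an induced cycle $v_1 v_2 \cdots v_k v_1$ with $k \ge 5$. I would look at the four consecutive vertices $v_1, v_2, v_3, v_4$. The edges $v_1 v_2$, $v_2 v_3$, $v_3 v_4$ are inherited from the cycle; the pairs $\{v_1, v_3\}$ and $\{v_2, v_4\}$ are non-edges because the cycle is induced; and $\{v_1, v_4\}$ is a non-edge since $v_1$ and $v_4$ are not adjacent on a cycle of length at least five. Thus $v_1 v_2 v_3 v_4$ is an induced $P_4$ in $G$, contradicting Proposition~\ref{prop:induced P3}.

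For the hole in $\overline{G}$, suppose $\overline{G}$ contains an induced cycle $v_1 v_2 \cdots v_k v_1$ with $k \ge 5$. Then inside $G$ the non-edges among $\{v_1, \ldots, v_k\}$ are precisely the edges of this cycle. The plan is again to extract an induced $P_4$ from these $k$ vertices, now viewed inside $G$, and invoke Proposition~\ref{prop:induced P3}. The four vertices I would take are $v_1$, $v_{k-1}$, $v_2$, $v_k$, in that order along the path. The three pairs $\{v_1, v_2\}$, $\{v_{k-1}, v_k\}$, and $\{v_1, v_k\}$ are consecutive on the cycle and hence non-edges of $G$; the remaining three pairs $\{v_1, v_{k-1}\}$, $\{v_{k-1}, v_2\}$, and $\{v_2, v_k\}$ are non-consecutive on the cycle for $k \ge 5$ and therefore are edges of $G$. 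Hence $v_1 v_{k-1} v_2 v_k$ is an induced $P_4$ in $G$, again a contradiction.

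The step that takes the most thought is locating the right four vertices of $\overline{C_k}$ whose positions force them to induce a $P_4$; once that quadruple is chosen, the verification is purely combinatorial and there is no genuine obstacle.
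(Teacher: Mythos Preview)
Your proof is correct and takes a genuinely different route from the paper's.

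The paper handles the two halves asymmetrically. For holes in $G$ it invokes Theorem~\ref{thm:componentstucture:new}: the condensation $G^*$ is complete multipartite, and an induced cycle in a complete multipartite graph has length at most four. For holes in $\overline{G}$ it first appeals to Proposition~\ref{prop:complement forbidden path} (the components of $\overline{G}$ have diameter at most two) to force $n=5$, and then uses the self-complementarity of $C_5$ to reduce to the first half.

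Your argument is more uniform and more elementary: both halves reduce directly to Proposition~\ref{prop:induced P3} (no induced $P_4$), which is proved early in Section~2 without any of the Section~3 machinery. The first half is immediate since any long induced cycle contains an induced $P_4$; for the second half your choice of the quadruple $v_1,\,v_{k-1},\,v_2,\,v_k$ exhibits an induced $P_4$ in $G$ sitting inside the complement of $C_k$, and the verification that the three ``path'' pairs are non-consecutive while the three ``non-path'' pairs are consecutive on the cycle is exactly right for all $k\ge 5$. This avoids both the structural Theorem~\ref{thm:componentstucture:new} and the diameter Proposition~\ref{prop:complement forbidden path}, so your proof is logically lighter and could in fact be moved earlier in the paper.
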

\begin{proof}
Suppose that $G$ contains a hole of length $l$.
Then $G^*$ contains a hole $C$ of length $l$.
By Theorem~\ref{thm:componentstucture:new}, $G^*$ is a complete multipartite graph.
Since $C$ is an induced subgraph of $G^*$, $C$ is a complete multipartite and so $l \le 4$.
Therefore $G$ does not contain a hole of length greater than or equal to five.

Suppose to the contrary that $\overline{G}$ has a hole $C$ of length $n$ for some integer $n \ge 5$.
By Proposition~\ref{prop:complement forbidden path}, $n=5$.
Then the complement of $C$ is a hole of length five, so $G$ has a hole of length five, which is a contradiction.
\end{proof}

The strong perfect graph theorem states that a graph $G$ is perfect if and only if neither $G$ nor $\overline{G}$ has a hole of odd length.
Therefore, by the above proposition, we have the following result.

\begin{Cor}
A niche-realizable graph is perfect.
\end{Cor}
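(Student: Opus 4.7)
The plan is to invoke the Strong Perfect Graph Theorem (Chudnovsky–Robertson–Seymour–Thomas), which characterizes perfect graphs as exactly those graphs $G$ such that neither $G$ nor $\overline{G}$ contains an induced odd cycle of length at least five. Since the hypothesis of this corollary sits immediately after Proposition~\ref{prop:hole}, essentially all the work has already been done.

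My approach would be as follows. Let $G$ be a niche-realizable graph. By Proposition~\ref{prop:hole}, neither $G$ nor $\overline{G}$ contains a hole of length greater than or equal to five. In particular, neither $G$ nor $\overline{G}$ contains a hole of odd length at least five. Applying the Strong Perfect Graph Theorem yields that $G$ is perfect.

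There is really no obstacle to overcome here: Proposition~\ref{prop:hole} already forbids all holes of length $\geq 5$ in both $G$ and $\overline{G}$, which is strictly stronger than forbidding only the odd ones, so the corollary follows in one line from SPGT. The only care needed is to cite the Strong Perfect Graph Theorem correctly and to note that the conclusion of Proposition~\ref{prop:hole} trivially implies the hypothesis of SPGT.
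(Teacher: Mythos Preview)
Your proposal is correct and matches the paper's own argument essentially verbatim: the paper also simply observes that Proposition~\ref{prop:hole} rules out all holes of length at least five in both $G$ and $\overline{G}$, and then invokes the Strong Perfect Graph Theorem.
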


\begin{Prop}
Let $G$ be a niche-realizable graph.
Then $G$ is chordal if and only if one of the following is true:
\begin{itemize}
  \item it has exactly three or exactly four components each of which is complete;
  \item it has exactly two components each of which is a complete graph or an expansion of a path graph of length two.
\end{itemize}
\end{Prop}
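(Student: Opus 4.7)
The plan is to condition on the number of components of $G$. By Corollaries \ref{cor:at least two components} and \ref{cor:components4}, a niche-realizable graph has exactly two, three, or four components. If $G$ has three or four components, Theorem \ref{thm:component:new} forces each component to be complete, so $G$ is a disjoint union of cliques and hence automatically chordal; both directions of the equivalence then collapse into the first bullet of the statement.

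It remains to treat the case of exactly two components $G_1, G_2$. By Theorem \ref{twocomponents}, each $G_i$ is an expansion of a complete $(a_i+b_i)$-partite graph $H_i$ with $a_i$ parts of size two and $b_i$ parts of size one. The first step is the observation that an expansion of a graph $H$ is chordal if and only if $H$ is chordal: any two vertices lying in the same replacement clique share all external neighbors, so if both appeared on an induced cycle of length at least four they would force a chord; therefore induced cycles in an expansion correspond bijectively to induced cycles of the same length in $H$. In particular, $G$ is chordal iff each $H_i$ is chordal.

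The next step is to show that $H_i$ is chordal iff $a_i \le 1$. If $a_i \ge 2$, choosing two distinct size-two parts $\{u_1,u_2\}$ and $\{v_1,v_2\}$ directly exhibits an induced $C_4$ on $u_1 v_1 u_2 v_2$. Conversely, when $a_i \le 1$, $H_i$ cannot contain an induced $C_4$ (any $C_4$ would demand two size-two parts, since parts have size at most two), while Proposition \ref{prop:hole} already rules out holes of length at least five in any niche-realizable graph; so $H_i$ is chordal.

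It remains to translate the numerical condition $a_i \le 1$ into the stated structural form. If $a_i=0$, then $H_i$ is a complete graph, so $G_i$ is a complete graph. If $a_i=1$, the $b_i$ singletons of $H_i$ all share the closed neighborhood $V(H_i)$, forming a homogeneous clique, so $H_i$ is itself an expansion of $P_3$ (the center of $P_3$ replaced by a clique of size $b_i$); here $b_i \ge 1$ is forced, for otherwise $H_i = \overline{K_2}$ would make $G_i$ disconnected, contradicting that $G_i$ is a single component. Hence $G_i$, as an expansion of an expansion of $P_3$, is itself an expansion of $P_3$. The converse direction is immediate, since complete graphs and expansions of $P_3$ are both chordal and the disjoint union of chordal graphs is chordal. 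The only conceptually nontrivial step is the expansion--chordality interchange; the rest reads off from Theorem \ref{twocomponents} together with the hole-length bound of Proposition \ref{prop:hole}.
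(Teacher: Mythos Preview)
Your proof is correct and follows essentially the same route as the paper's: reduce to the two-component case via Theorem~\ref{thm:component:new}, invoke Theorem~\ref{twocomponents} to write each $G_i$ as an expansion of a complete multipartite $H_i$ with parts of size at most two, use Proposition~\ref{prop:hole} to reduce chordality to the absence of an induced $C_4$, and then observe that $H_i$ contains $C_4$ exactly when $a_i\ge 2$. The only organizational difference is that you first isolate the general lemma ``an expansion of $H$ is chordal iff $H$ is chordal'' and then apply Proposition~\ref{prop:hole} at the level of $H_i$, whereas the paper applies Proposition~\ref{prop:hole} directly to $G$ and passes the $C_4$-freeness down to $H_i$; these are equivalent rearrangements of the same argument. (Two minor remarks: your word ``bijectively'' is a slight overstatement---several induced cycles in the expansion may project to the same cycle in $H$---but the needed statement, that a hole of a given length $\ge 4$ exists in the expansion iff one exists in $H$, is exactly what your argument proves; and when you cite Proposition~\ref{prop:hole} for $H_i$, note that it literally applies to $G$, with your expansion correspondence then transferring the conclusion to $H_i$.)
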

\begin{proof}
By Theorem~\ref{thm:component:new}, it is sufficient to assume that $G$ has exactly two components.
Let $G_1$ and $G_2$ be the components of $G$.
Since $G$ is niche-realizable, by Theorem~\ref{twocomponents}, $G_i$ is an expansion of complete multipartite graph $H_i$ with partite sets of size one or two for each $i \in \{1,2\}$.
By Proposition~\ref{prop:hole}, $G$ is chordal if and only if $G_i$ does not contain a hole of length four for each $i \in \{1,2\}$ if and only if $H_i$ does not contain a hole of length four for each $i \in \{1,2\}$.
Fix $i \in \{1,2\}$.
It is easy to see that $H_i$ does not contain a hole of length four if and only if $H_i$ has at most one partite set of size two.
If $H_i$ does not have a partite set of size two, then $H_i$ is a complete graph and so $G_i$ is a complete graph.
Suppose that $H_i$ contains exactly one partite set of size two.
Since $H_i$ is connected, there is another partite set of size one and so $H_i$ is an expansion of an induced path of length two, which implies that so is $G_i$.
Thus we may conclude that $H_i$ has at most one partite set of size two for each $i \in \{1, 2\}$ if and only if each component of $G$ is a complete graph or an expansion of a path graph of length two.
\end{proof}

We denote the size of a maximum clique in a graph $G$ by $\omega(G)$.

\begin{Prop}\label{prop:omega}
For a niche-realizable graph $G$, each component of $G$ contains at most $2\omega(G)$ vertices and $|V(G)| \le 4\omega(G)$.
\end{Prop}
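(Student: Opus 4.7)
The plan is to split on the number of components, which by Corollaries~\ref{cor:at least two components} and~\ref{cor:components4} lies in $\{2,3,4\}$. In the three- or four-component case, Theorem~\ref{thm:component:new} tells us every component of $G$ is itself a clique, so each component has at most $\omega(G)$ vertices and in particular $|V(G)|\le 4\omega(G)$. This case is immediate.

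The substantive case is when $G$ has exactly two components $G_1,G_2$. Here Theorem~\ref{twocomponents} gives that each $G_i$ is an expansion of a complete multipartite graph $H_i$ whose partite sets all have size one or two. Fix $i$ and label the partite sets of $H_i$ as $P_1,\dots,P_{k_i}$, with $k_i=a_i+b_i$. Writing $K_v$ for the clique that replaces the vertex $v$ of $H_i$ in the expansion, I would compute directly:
\[
|V(G_i)|=\sum_{j=1}^{k_i}\sum_{v\in P_j}|K_v|,\qquad \omega(G_i)=\sum_{j=1}^{k_i}\max_{v\in P_j}|K_v|.
\]
The formula for $\omega(G_i)$ comes from the observation that a clique in an expansion meets each $P_j$ in at most one of the cliques $K_v$ (since two vertices in the same partite set of $H_i$ are nonadjacent in $H_i$ and therefore in $G_i$), and conversely, choosing the largest $K_v$ from each part yields a clique of the stated size.

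Because $|P_j|\le 2$ for every $j$, we have $\sum_{v\in P_j}|K_v|\le 2\max_{v\in P_j}|K_v|$ term by term, so $|V(G_i)|\le 2\omega(G_i)$. Since cliques of $G$ lie in a single component, $\omega(G_i)\le\omega(G)$, and therefore $|V(G_i)|\le 2\omega(G)$. Summing over the two components yields $|V(G)|\le 4\omega(G)$, and this bound also covers the three- and four-component cases since then each component contributes at most $\omega(G)$. The only step that requires care is the identification of $\omega(G_i)$ with $\sum_j\max_{v\in P_j}|K_v|$; everything else is a direct counting argument once the expansion structure from Theorem~\ref{twocomponents} is in hand.
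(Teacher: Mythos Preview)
Your proof is correct and follows essentially the same route as the paper: both split on the number of components, dispatch the three/four-component case via Theorem~\ref{thm:component:new}, and in the two-component case use the expansion structure from Theorem~\ref{twocomponents} together with the observation that selecting the largest replacement clique from each partite set of $H_i$ yields a clique in $G_i$. The only cosmetic difference is that the paper argues by contradiction (assuming $|V(G_1)|\ge 2\omega(G)+1$ and deriving a clique of size $\omega(G)+1$), whereas you compute $\omega(G_i)=\sum_j\max_{v\in P_j}|K_v|$ directly and compare it to $|V(G_i)|$ termwise; your formulation is arguably cleaner, but the underlying idea is identical.
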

\begin{proof}
Suppose that $G$ has three or four components.
Then, by Theorem~\ref{thm:component:new}, each component is complete, so each component has at most $\omega(G)$ vertices.
Thus $|V(G)| \le 4\omega(G)$.
Suppose that $G$ has exactly two components $G_1$ and $G_2$.
To the contrary, we suppose that one of the two components has at least $2\omega(G)+1$ vertices.
Without loss of generality, we may assume that $G_1$ has at least  $2\omega(G)+1$ vertices.
Then, by Theorem~\ref{twocomponents}, $G_1$ is an expansion of a complete $l$-partite graph $H$ with partite sets of size at most two for a positive integer $l$.
When $G_1$ was obtained from $H$, for each $i$, $1 \le i \le l$, let $s_i$ and $t_i$ be nonnegative integers such that $s_i \ge t_i$ and the vertices in the $i$th partite set of $H$ have been replaced with a clique $S_i$ of size $s_i$ and, as long as $t_i > 0$, with a clique $T_i$ of size $t_i$ ($t_i=0$ if the $i$th partite set has one vertex).
By our assumption, $\sum_{i=1}^{l} (s_i+t_i) \ge 2\omega(G)+1$ and $\sum_{i=1}^{l}s_i \ge \sum_{i=1}^{l}t_i$.
Therefore
\[
\sum_{i=1}^{l}s_i \ge \omega(G)+1.
\]
For $i \neq j$, the vertices in $H$ corresponding to $S_i$ and $S_j$ belong to distinct partite set and so are adjacent in $H$.
Thus $\bigcup_{i=1}^{l}V(S_i)$ forms a clique in $G$ of size at least $\omega(G)+1$.
Hence we have reached a contradiction and the proposition statement is true.
\end{proof}

\noindent
Since $\omega(G) \le 4$ for a planar graph $G$, the following corollary is immediately true by Proposition~\ref{prop:omega}.

\begin{Cor}\label{planar}
Let $G$ be a planar niche-realizable graph.
Then each component of $G$ contains at most $8$ vertices and $|V(G)| \le 16$.
\end{Cor}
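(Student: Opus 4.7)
The plan is to invoke Proposition~\ref{prop:omega} directly, together with the classical fact that $K_5$ is nonplanar, which forces $\omega(G) \le 4$ whenever $G$ is planar. Since we are told $G$ is planar and niche-realizable, Proposition~\ref{prop:omega} gives each component of $G$ at most $2\omega(G)$ vertices and $G$ itself at most $4\omega(G)$ vertices.

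First I would recall (or state) the planarity bound $\omega(G) \le 4$: any clique $K_n$ in $G$ is itself a planar subgraph, and Kuratowski's theorem (or more directly, the fact that $K_5$ contains no planar drawing) rules out $n \ge 5$. Then I would substitute this bound into the two inequalities from Proposition~\ref{prop:omega}: $2\omega(G) \le 8$ for the component bound and $4\omega(G) \le 16$ for the total-vertex bound. Both conclusions follow.

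There is no real obstacle here; the corollary is a one-line substitution into Proposition~\ref{prop:omega}. The only point worth a sentence is the justification of $\omega(G)\le 4$ for planar $G$, and the rest is arithmetic. Sharpness would be a natural side remark (one can ask whether $K_4 \cup K_4 \cup K_4 \cup K_4$ or similar configurations realize the bound), but this is not needed for the statement as given.
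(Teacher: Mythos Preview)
Your proof is correct and matches the paper's own argument exactly: the paper simply notes that $\omega(G)\le 4$ for planar $G$ and invokes Proposition~\ref{prop:omega}.
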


\begin{Prop}
For a niche-realizable graph $G$, the size of a maximum matching of $G$ is greater than or equal to ${|V(G)-4|} \over {2}$.
\end{Prop}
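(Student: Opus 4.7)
The plan is to prove the equivalent statement that at most four vertices of $G$ remain uncovered by some maximum matching. By Corollaries~\ref{cor:at least two components} and~\ref{cor:components4}, a niche-realizable graph has two, three, or four components, so I case-split accordingly.

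If $G$ has three or four components, Theorem~\ref{thm:component:new} forces each component to be complete; a complete graph leaves at most one vertex uncovered in a maximum matching, so summing over the (at most four) components gives at most four uncovered vertices in total.

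If $G$ has exactly two components $G_1$ and $G_2$, I will show that each $G_i$ admits a matching missing at most two vertices. By Theorem~\ref{twocomponents}, $G_i$ is an expansion of a complete multipartite graph $H_i$ whose partite sets have size at most two. Group the cliques that replace vertices of a common partite set of $H_i$ into \emph{super-partitions} $Q_1, \ldots, Q_k$ of $V(G_i)$; each $Q_j$ is either a single clique (when the corresponding partite set of $H_i$ has size one) or a disjoint union of two cliques (when it has size two). From the definition of an expansion, any two vertices lying in distinct super-partitions are adjacent in $G_i$, while two vertices in different cliques of the same two-clique super-partition are non-adjacent. I build the matching in two stages: first take a maximum matching inside each clique of each $Q_j$, leaving $u_j \in \{0,1\}$ uncovered vertices when $Q_j$ is a single clique and $u_j \in \{0,1,2\}$ when $Q_j$ is a disjoint union of two cliques; then augment by a maximum matching of $G_i[U]$, where $U$ is the set of all leftover vertices. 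The previous adjacency observations make $G_i[U]$ a complete multipartite graph with parts $U \cap Q_j$ of size at most two, and the standard formula $\nu(K_{n_1, \ldots, n_k}) = \min(\lfloor |U|/2 \rfloor,\, |U| - \max_j |U \cap Q_j|)$ leaves at most $\max(|U| \bmod 2,\, 2\max_j |U \cap Q_j| - |U|) \le 2$ uncovered vertices, since $\max_j |U \cap Q_j| \le 2$.

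The main point requiring care is verifying that cross-super-partition edges really are all present (so the stage-two matching lies in $G_i$ and is disjoint from stage one) and that $G_i[U]$ is genuinely complete multipartite so that the matching-number formula applies; both follow directly from the expansion structure. Beyond these checks the argument is mainly bookkeeping: the expansion picture turns the problem into a purely combinatorial bound on the deficiency of a complete multipartite graph with small parts.
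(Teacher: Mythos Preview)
Your proof is correct, but it takes a considerably more elaborate route than the paper's. For the three- and four-component cases you agree with the paper. For the two-component case, however, the paper bypasses the expansion structure entirely: it simply observes that by Theorem~\ref{thm:alpha} each component $G_i$ satisfies $\alpha(G_i)\le 2$, so if a maximum matching $M$ left three or more vertices of $G_i$ unsaturated, two of them would have to be adjacent, and adding that edge to $M$ would contradict maximality. This one-line argument works for \emph{any} graph with independence number at most two, without ever invoking Theorem~\ref{twocomponents} or the complete-multipartite matching formula.

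Your approach, by contrast, is constructive: you actually build a near-perfect matching from the expansion description. This is a legitimate alternative and has the minor advantage of being explicit, but it costs you the structural theorem, the two-stage construction, and the deficiency formula $\nu(K_{n_1,\ldots,n_k})=\min(\lfloor |U|/2\rfloor,\,|U|-\max_j n_j)$, none of which the paper needs. In short, both arguments are sound; the paper's is shorter and more general, yours is more hands-on.
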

\begin{proof}
If $G$ has three or four components, then there are at most four unsaturated vertices for a maximum matching of $G$ by Theorem~\ref{thm:component:new}.
Suppose $G$ has exactly two components $G_1$ and $G_2$.
Fix $i \in \{1, 2\}$.
Suppose, to the contrary, that there exists a maximum matching $M$ such that at least three vertices in $G_i$ are $M$-unsaturated.
Then, since $\alpha(G_i) \le 2$ by Theorem~\ref{thm:alpha}, there exist two adjacent vertices among the $M$-unsaturated vertices in $G_i$.
Adding the edge joining the two vertices to $M$ creates a matching of size $|M|+1$, which contradicts the maximality of $M$.
Therefore we may conclude that there are at most two unsaturated vertices in $G_i$ for each maximum matching of $G$ for each $i=1$, $2$, and so the proposition statement is valid.
\end{proof}

\begin{Prop}
Given a niche-realizable graph $G$, if $G[S]$ is a connected non-complete regular graph for a vertex set $S$, then $G[S]$ is an expansion of a complete multipartite graph on $l$ partite sets of size two for a positive integer $l$ which is obtained by replacing each vertex with a clique of size $t$ for a positive integer $t$.
\end{Prop}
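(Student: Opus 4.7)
The plan is to reduce to the two-component case via Theorem~\ref{thm:component:new}, apply the structural theorem for two-component niche-realizable graphs (Theorem~\ref{twocomponents}) to the component containing $S$, and then exploit regularity together with non-completeness to pin down the sizes of the blobs.

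Since $G[S]$ is connected, $S$ lies inside a single component $C$ of $G$. If $G$ had three or four components, then $C$ would be complete by Theorem~\ref{thm:component:new}, so $G[S]$ would also be complete, contradicting the hypothesis. Thus $G$ has exactly two components, and by Theorem~\ref{twocomponents}, $C$ is an expansion of a complete multipartite graph $H$ whose partite sets have size at most two. Write $B_1,\dots,B_k$ for the blobs of this expansion (each a clique of $C$ corresponding to a vertex of $H$), and set $S_j:=S\cap B_j$, $I:=\{j: S_j\neq\emptyset\}$, and $n:=|S|$. From the definition of an expansion, two vertices in different blobs $B_j,B_{j'}$ are adjacent in $C$ if and only if $B_j$ and $B_{j'}$ lie in different partite sets of $H$, while two vertices in the same blob are always adjacent.

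Next I compute the degree of a vertex $v\in S_j$ in $G[S]$. If $B_j$ lies in a singleton partite set of $H$, or in a size-two partite set whose partner $B_{j'}$ satisfies $S_{j'}=\emptyset$, then $v$ is adjacent to every other vertex of $S$, so $\deg_{G[S]}(v)=n-1$. Otherwise the partner blob $B_{j'}$ is also in $I$ and $v$ misses exactly the vertices of $S_{j'}$, giving $\deg_{G[S]}(v)=n-1-|S_{j'}|$. Because $G[S]$ is not complete, some pair of non-adjacent vertices must exist, forcing at least one index $j_0\in I$ whose partner also lies in $I$, and hence $\deg_{G[S]}(v_{j_0})<n-1$. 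Regularity then forces every $j\in I$ to fall into the second case: each non-empty blob lies in a size-two partite set of $H$ whose partner blob also meets $S$.

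Finally, partition $I$ into pairs $\{j,j'\}$ according to the size-two partite sets of $H$. For each pair, equating the degrees of vertices in $S_j$ and $S_{j'}$ gives $|S_j|=|S_{j'}|$, and comparing two distinct pairs $\{j,j'\}$ and $\{k,k'\}$ via regularity gives $|S_{j'}|=|S_{k'}|$, so there is a common value $t\geq 1$ with $|S_j|=t$ for every $j\in I$. Letting $l$ denote the number of pairs, $G[S]$ consists of $2l$ blobs of size $t$ grouped into $l$ pairs, with all possible edges between blobs in different pairs and no edges between blobs in the same pair; equivalently, $G[S]$ is the expansion of the complete $l$-partite graph with partite sets of size two obtained by replacing each vertex with $K_t$. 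The main obstacle is the bookkeeping in the degree computation and the case analysis ruling out singleton partite sets and empty partners, both of which are forced by combining non-completeness with regularity.
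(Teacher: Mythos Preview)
Your proof is correct and follows essentially the same approach as the paper. The only presentational difference is that the paper first observes that $G[S]$ is itself an expansion of a complete $l$-partite graph with partite sets of size at most two (since an induced subgraph of an expansion of a complete multipartite graph is again such an expansion), and then works with clique sizes $s_i\ge t_i$ in that expansion, whereas you work directly with the intersections $S_j=S\cap B_j$ inside the component's expansion; the degree computation and the use of regularity plus non-completeness to force all blob sizes equal are identical in substance.
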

\begin{proof}
Suppose that $G[S]$ is a connected non-complete regular graph for a vertex set $S$.
By Corollaries~\ref{cor:at least two components} and~\ref{cor:components4} and Theorem~\ref{thm:component:new}, $G$ has exactly two components.
Thus, by Theorem~\ref{twocomponents}, each component of $G$ is an expansion of a complete multipartite graph with partite sets of size at most two.
Since $G[S]$ is connected, it is an induced subgraph of one of the components.
Since an induced subgraph of a complete multipartite graph is also a complete multipartite graph, $G[S]$ is an expansion of a complete $l$-partite graph $H$ with partite sets of size at most two for a positive integer $l$.
When $G[S]$ was obtained from $H$, for each $i$, $1 \le i \le l$, let $s_i$ and $t_i$ be nonnegative integers such that $s_i \ge t_i$ and the vertices in the $i$th partite set of $H$ have been replaced with cliques of sizes $s_i$ and $t_i$ ($t_i=0$ if the $i$th partite set has one vertex).
Then the degree of a vertex in $K_{s_{i}}$ which replaces a vertex in the $i$th partite set in $G[S]$ is $|S|-t_i-1$ for each $i$, $1 \le i \le l$.
Since $G[S]$ is regular, $t_i = t$ for some nonnegative integer $t$ for each $i$, $1 \le i \le l$.
If $t=0$, then $G[S]$ is complete, which contradicts the assumption that $G[S]$ is non-complete.
Therefore $t>0$.
Then the degree of a vertex in $K_{t_i}$ which replaces a vertex in the $i$th partite set in $G[S]$ is $|S|-s_i-1$.
Since $G[S]$ is regular, $|S|-s_i-1 = |S|-t_i-1$ and so $s_i=t_i=t$ for each $i$, $1 \le i \le l$.
Thus the proposition follows.
\end{proof}

Let $G$ be a connected graph.
A \emph{vertex cut} of $G$ is a subset $V'$ of $V(G)$ such that $G - V'$ is disconnected.
A \emph{$k$-vertex cut} is a vertex cut of $k$ elements.
If $G$ is not complete, then the \emph{connectivity} $\kappa(G)$ of $G$ is the minimum $k$ for which $G$ has a $k$-vertex cut.
Otherwise, $\kappa(G)$ is defined to be $|V(G)| - 1$.

\begin{Thm}\label{thm:hamilton}
Every component of a niche-realizable graph has a Hamilton path and every $2$-connected component of a niche-realizable graph is hamiltonian.
\end{Thm}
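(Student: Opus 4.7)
The plan is to reduce both assertions to a single application of the Chv\'atal--Erd\H{o}s theorem, leveraging the bound $\alpha(C)\le 2$ supplied by Corollary~\ref{cor:alpha2} for every component $C$ of a niche-realizable graph. Recall Chv\'atal--Erd\H{o}s: any graph $H$ on at least three vertices with $\kappa(H)\ge\alpha(H)$ is hamiltonian. Its companion statement for Hamilton paths---$\kappa(H)\ge\alpha(H)-1$ implies $H$ has a Hamilton path---follows by adjoining to $H$ a universal vertex (which raises $\kappa$ by $1$ while leaving $\alpha$ unchanged), applying Chv\'atal--Erd\H{o}s to the resulting graph, and deleting that vertex from the obtained Hamilton cycle.

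With these two ingredients in hand, I would let $C$ be an arbitrary component of a niche-realizable graph $G$ and use Corollary~\ref{cor:alpha2} to note $\alpha(C)\le 2$. For the Hamilton path claim, the case $|V(C)|=1$ is trivial, and otherwise $C$ is connected on at least two vertices, so $\kappa(C)\ge 1\ge \alpha(C)-1$ and the Hamilton path version of Chv\'atal--Erd\H{o}s immediately yields a Hamilton path in $C$. For the hamiltonicity claim, the hypothesis that $C$ is $2$-connected forces $|V(C)|\ge 3$ and $\kappa(C)\ge 2\ge \alpha(C)$, so Chv\'atal--Erd\H{o}s directly furnishes a Hamilton cycle in $C$.

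I do not foresee a genuine obstacle: the structural work was already done in Theorem~\ref{thm:alpha} and its corollary, and what remains is a routine invocation of a classical theorem. The only matter of taste is whether to cite the Hamilton path extension of Chv\'atal--Erd\H{o}s from a standard reference or to include the one-line universal-vertex derivation sketched above.
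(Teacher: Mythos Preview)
Your argument is correct and is cleaner than the paper's. Both proofs handle the $2$-connected case identically, by invoking Chv\'atal--Erd\H{o}s together with $\alpha(X)\le 2$. The difference lies in the Hamilton path assertion. You simply adjoin a universal vertex to $C$, observe that this raises $\kappa$ by one while fixing $\alpha$, and apply Chv\'atal--Erd\H{o}s once more; the structural results of Section~3 are never needed. The paper instead uses Theorems~\ref{thm:component:new} and~\ref{twocomponents} to realise a non-complete component $X$ with $\kappa(X)=1$ as an expansion of a complete multipartite graph $H$, then adds a \emph{clone} of a chosen cut vertex $w$ rather than a universal vertex. Because a clone keeps $X'$ an expansion of the same $H$, the paper can quote niche-realizability of $X'$ and feed it back into the already-proven $2$-connected case---but it must then verify $2$-connectivity of $X'$ by hand, exhibiting two internally disjoint paths between any nonadjacent pair. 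Your universal-vertex trick sidesteps both the structural set-up and that verification, at the cost of leaving the ambient niche-realizable context; the paper's approach stays inside that context but works harder for it.
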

\begin{proof}
Let $G$ be a niche-realizable graph and $X$ be a component of $G$.
If $X$ is complete, then $X$ is obviously hamiltonian.
Suppose that $X$ is not complete.
Then, by Corollaries~\ref{cor:at least two components} and~\ref{cor:components4} and Theorem~\ref{thm:component:new}, $G$ has exactly two components.
In addition, by Theorem~\ref{twocomponents},
$X$ is an expansion of a complete $l$-partite graph $H$ with partite sets of size at most two.
Furthermore, by Theorem~\ref{thm:alpha}, $\alpha(X) \le 2$.
If $X$ is $2$-connected, then $X$ has at least three vertices and $\kappa(X) \ge 2$, and so $\kappa(X) \ge \alpha(X)$, which guarantees that $X$ is hamiltonian by the Chv\'{a}tal-Erd\H{o}s Theorem.

Now we consider the case where $\kappa(X) = 1$.
Then $X$ has a cut vertex $w$ as $X$ is not $K_2$.
Now there exist two nonadjacent vertices $u$ and $v$ in $X$ such that $u$ and $v$ have exactly one common neighbor $w$.
For notational convenience, we denote by $a^*$ the vertex in $H$ which was replaced with the clique containing a vertex $a$ in $X$.
We note that $u^*$ and $v^*$ belong to the same partite set, say $Q$, in $H$.
We add a vertex $x$ to $X$ and edges incident to $x$ to obtain a new graph $X'$ so that $x$ and $w$ are homogeneous in $X'$.
Since $w$ is replaced with the clique $\{x, w\}$, we may regard $X'$ as an expansion of $X$ and so $X'$ is still an expansion of $H$.
Therefore, by Theorem~\ref{twocomponents}, $X'$ is niche-realizable.
Take two nonadjacent vertices $y$ and $z$ in $X'$.
Then $y^*$ and $z^*$ belong to the same partite set in $H$.
If $y^*$ and $z^*$ in $Q$, then $ywz$ and $yxz$ are distinct internally disjoint $(y, z)$-paths in $X'$.
If $y^*$ and $z^*$ belong to a partite set in $H$ distinct from $Q$, then $yuz$ and $yvz$ are distinct internally disjoint $(y, z)$-paths in $X'$.
Therefore $X'$ is $2$-connected and so, by the argument above, hamiltonian, which immediately implies that $X=X'-x$ has a  Hamilton path.
\end{proof}

\section*{Acknowledgments}

The first and third author's research was supported by
the National Research Foundation of Korea (NRF) funded by the Korea government (MEST) (No.\ NRF-2017R1E1A1A03070489) and by the Korea government(MSIP)
(No.\ 2016R1A5A1008055).
The first and second author's research was supported by Basic Science Research Program through the National Research Foundation of Korea (NRF) funded by the Ministry of Education (NRF-2018R1D1A1B07049150).

\end{document}